\definecolor{darkgreen}{rgb}{0,0.51,0.11}
\numberwithin{equation}{section}
\def\RR{{\mathcal{R}}}
\def\QQ{{\mathcal{Q}}}
\def\ZZ{{\mathbb{Z}}}
\def\NN{{\mathbb{N}}}
\def\x{\tilde{x}}
\def\y{\tilde{y}}
\def\r{\tilde{r}}
\def\z{\bar{z}}
\def\p{\bar{p}}
\def\q{\bar{q}}
\def\fmx{\tilde{f}^{m}(\tilde{x})}
\def\fnx{\tilde{f}^{n}(\tilde{x})}
\def\fmy{\tilde{f}^{m}(\tilde{y})}
\def\fny{\tilde{f}^{m}(\tilde{y})}
\def\fnx*{\tilde{f}^{-n}(\tilde{x})}
\def\fmx*{\tilde{f}^{-m}(\tilde{x})}
\def\fmy*{\tilde{f}^{-m}(\tilde{y})}
\def\fny*{\tilde{f}^{-m}(\tilde{y})}
\newtheorem{theorem}{Theorem}[]
\newtheorem{corollary}{Corollary}[theorem]
\newtheorem{lemma}{Lemma}[section]
\newtheorem{proposition}{Proposition}[section]
\newtheorem{definition}{Definition}[section]
\newtheorem{remark}{Remark}[section]
\newtheorem{question}{Question}
\begin{document}











\title{On the Uniqueness of SRB Measures for Endomorphisms}
\author{P. Mehdipour \footnote{Post-doc at Federal University of Itajuba}}

\maketitle
\begin{abstract}
In this paper we improve the results of \cite{MT} and show that a weak hyperbolic transitivity implies the uniqueness of hyperbolic SRB measures. As an important corollary, it arises the ergodicity of the system in a conservative setting. It also arises the condition which implies the stable ergodicity as well as the statistical stability for a general $C^2$-regular map. 
\end{abstract}

\renewcommand{\thefootnote}{\fnsymbol{footnote}}
\footnote[0]{2010 Mathematics Subject classification 58K05, 53A04, 57R45.}
\footnote[0]{Key Words and Phrases. Space curves, generalized evolute, flatting, twisting, contact.}

\section{\textbf{Introduction}}\label{intro}
The contrast of topological and measure theoretical properties is an interesting subject which frequently appears in the study of dynamics. 

In a valuable simple construction, I. Kan \cite{5} gave an example of a local diffeomorphism $f$ defined on the cylinder $\mathbb{S}^1 \times [0,1]$ such that $f$ can be topologically transitive and admits two SRB measures with intermingled basins. The richness of intermingled basins property and the non-uniqueness of the SRB measures together with topological transitivity is a nice phenomenon. We would like to mention that by a "SRB measure" we mean measures that their unstable conditional parts are absolutely continuous with respect to Lebesgue (see \cite{MT} for a precise definition). Once assuming dynamics bearing hyperbolic measures (ergodic measures bearing non-zero Lyapunov exponents), then such SRB measures are also "phyisical measures" (a measure with positive Lebesgue basin of attraction).

Along the results obtained in \cite{13} and \cite{MT} 
respectively about the uniqueness of SRB measures for transitive diffeomorphisms of surfaces and the maximum number of SRB measures for endomorphisms, a forward step seems to be a condition which implies the uniqueness of SRB measures for endomorphisms of the same type and surely such a condition should not outcrop in examples of type I.Kan \cite{5}.


Let $f$ be a $C^2$ regular map (local diffeomorphism) on a compact closed Riemannian manifold and $\mu$ any hyperbolic invariant measure (for a definition see \S\ref{preliminaries} and for simplicity we may call such pair $(f,\mu)$ a "measure dynamics"). We show that either a weak hyperbolic transitivity or any weak hyperbolic pre-transitivity is enough to achieve the SRB uniqueness property for endomorphisms. By a weak hyperbolic transitivity we mean the existence of a non-uniformely hyperbolic dense total orbit and by pre-transitivity, we mean the density of the total orbit of a periodic point which may commonly occurs for endomorphisms. By a total orbit $O_T(.)$ we simply mean the set of all images and pre-images of a point in the phase space.

There are results in which one can find some uniqueness results for Axiom A attractors
of a $C^2$-endomorphism and Axiom A endomorphisms (see \cite{1}, \cite{2}). What we are interested here is mostly related to the uniqueness of SRB measures for the case of non-uniformly hyperbolic endomorphisms as well as attractors of Milnor types \cite{M}. 

Despite the fact that examples of type I. Kan are topologically transitive \cite{BDV}, but we do not know any such example with topologically mixing propert. In this way it arises a conjecture that having the topologically mixing property, may imply the uniqueness of SRB measures. In this paper our intension is to show that by a weak hyperbolic transitivity (which in case can be a weaker condition than the mixing property) one may obtain the uniqueness of SRB measures. In other words we can say that the non-robustness property of Kan-type examples on boundary-less manifolds have relation to their non-robustness of weak hyperbolic transitivity. In this direction, the recent work of C. Bonatti and R. Potrie \cite{BP2} is valuable since in a partially hyperbolic setting with mostly contracting dominated splitting, they construct examples of 3-dimensional torus with infinitely many intermingled basins and mixing property. Their construction does not work in a non-uniformly hyperbolic setting of our type. 

Considering the conditions we give here, to obtain the uniqueness property of SRB measures, the following question seems quite natural:

\begin{question}
Is there any topologically mixing measure dynamics with a unique hyperbolic SRB measure and without a weak hyperbolic transitivity?
\end{question}

Some other recent results Due to Z. Lian, P. Liu and K. lu \cite{LLL}. They prove the uniqueness of SRB measures for a class of uniformly hyperbolic and mixing partially hyperbolic diffeomorphisms. 

The idea of using hyperbolic periodic points to analyze the number of SRB measures appears in \cite{13} and \cite{VY1}. In the context of partially hyperbolic diffeomorphisms with mostly contracting center, Viana and Yang \cite{VY} exhibited skeleton (defined by them with some similar properties) determining the number of basins of physical measures. They also concluded continuity results about the number of physical measures. Once working at the world of diffeomorphisms, the notion of a hyperbolic periodic point is well defined. But in case of a hyperbolic local diffeomorphism this notion plays a more concrete role by looking at the subset of the pre-images of a hyperbolic point $p$. They are copies of the original periodic point which in case can appear with same properties. As a simple example take a hyperbolic periodic point and look at its Lyapunov exponent in comparison with its pre-periodics one. Their forward Lyapunov exponents will coincide and if we use this, to construct an stable sub-bundle along the total orbit of $p$. Once defining a homoclinic relation between different periodic points by a transversality condition along their stable and unstable sets, then this same stable-sub bundle property of a total orbit of $p$ may help us to make relation between periodic points.

The first version of following theorem was proved under the condition of a general pre-transitivity of a point, but due to the Dichotomy of Transitivity \cite{LPV}, the pre-transitivity condition may reduces to the transitivity ( i.e. the existence of a forward dense orbit).


\begin{theorem} \label{main1}
Let $f: M \rightarrow M$ be a $C^2-$regular map of a closed $2-$dimensional manifold. Then a weak hyperbolic transitivity implies uniqueness of the (hyperbolic) SRB measure. (It works for $dim M=n$, up to a fixed stable $k-$index)
\end{theorem}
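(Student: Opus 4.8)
The plan is to argue by contradiction, showing that a heteroclinic-type relation between two candidate measures forces their ergodic basins to overlap. Suppose there exist two distinct ergodic hyperbolic SRB measures $\mu_1$ and $\mu_2$. By Pesin theory, for each $i$ and $\mu_i$-almost every point there are local stable and unstable manifolds, varying measurably and with uniform size on Pesin blocks of large $\mu_i$-measure. The defining SRB condition is absolute continuity of the unstable conditionals; combined with the absolute continuity of the stable holonomies this yields that the ergodic basin $B(\mu_i)$ has positive Lebesgue measure and saturates a positive-measure family of unstable plaques. This is the ingredient that turns a geometric intersection into a measure-theoretic overlap.

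Since $f$ is only a local diffeomorphism, unstable manifolds are not canonical: they depend on a choice of backward branch. I would therefore lift the problem to the natural extension $\tilde{f}\colon \widetilde{M}\to\widetilde{M}$, where each point $\tilde{x}$ carries a prescribed pre-orbit and hence a well-defined unstable set $W^u(\tilde{x})$ projecting down to $M$. The stable sub-bundle along a total orbit described in the Introduction is precisely what permits these unstable plaques to be carried coherently along the tree of pre-images.

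Next I would exploit the weak hyperbolic transitivity. Let $x$ be the point whose total orbit $O_T(x)$ is dense and non-uniformly hyperbolic. Density means $O_T(x)$ visits every neighborhood of $\mathrm{supp}(\mu_1)$ and of $\mathrm{supp}(\mu_2)$, while non-uniform hyperbolicity furnishes local stable and unstable manifolds along the orbit. The core step is to produce a transversal intersection between an unstable plaque coming from the $\mu_1$-world and a stable plaque coming from the $\mu_2$-world, using $O_T(x)$ as a bridge. In dimension $2$ the stable and unstable directions are complementary and one-dimensional (more generally, complementary for a fixed stable $k$-index), so such an intersection is a single transversal point, and continuity of the Pesin data on a block of large measure guarantees that the intersecting plaques have definite size.

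The conclusion then follows from the inclination (lambda) lemma: forward iterates of the $\mu_1$-unstable plaque $C^1$-accumulate on unstable plaques arbitrarily close to $\mathrm{supp}(\mu_2)$, so by absolute continuity of the unstable holonomy the basin $B(\mu_1)$ meets $B(\mu_2)$ in a set of positive Lebesgue measure; since ergodic basins of distinct measures are disjoint modulo Lebesgue-null sets, this forces $\mu_1=\mu_2$. The main obstacle I anticipate is exactly the endomorphism feature: ensuring that the connection built in the natural extension projects back to a genuine transversal intersection on $M$, and that the density of the \emph{total} orbit — images \emph{and} pre-images — rather than merely a forward orbit, is used correctly to bridge the two unstable laminations. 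Controlling the dependence of $W^u$ on the chosen inverse branch, and verifying that transversality survives the projection $\widetilde{M}\to M$, is where the argument is most delicate.
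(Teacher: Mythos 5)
Your overall skeleton matches the paper's proof: lift to the inverse limit space, use the dense non-uniformly hyperbolic total orbit as a bridge to force transversal intersections between Pesin plaques attached to the two candidate measures, then conclude by a lambda-lemma/absolute-continuity (Hopf-type) argument. The paper packages this as density of $O_{T}^{f}(x)$ in $M^{f}$ (Lemma \ref{transitivity}), a transversality lemma (Lemma \ref{transversality}), membership of all lifts $\pi^{-1}(x)$ in every ergodic homoclinic class (Lemma \ref{total transitivity in EHC}), and coincidence of all such classes, quoting the correspondence between hyperbolic SRB measures and ergodic homoclinic classes from \cite{MT}; you instead go directly for overlap of basins. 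That difference in packaging is harmless, but your proposal glosses over the two points where the real work lies, and as written those steps do not go through.

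First, your ``core step'' transversality is justified by saying that ``continuity of the Pesin data on a block of large measure guarantees that the intersecting plaques have definite size.'' That controls only the plaques of the $\mu_{i}$-generic points. The bridge points --- the points of $O_{T}(x)$ --- need not belong to the closure of any Pesin block of $\mu_{1}$ or $\mu_{2}$ (the paper flags exactly this in Remark \ref{important}); weak hyperbolicity gives them stable/unstable plaques only of non-uniform size and angle. As you choose orbit points closer and closer to a fixed block point, you are choosing \emph{different} points of the orbit, and their plaque sizes may shrink faster than their distance to the block point, so density plus ``local plaques exist'' does not by itself produce any intersection. The paper's Lemma \ref{transversality} exists precisely to close this gap, via Katok's admissible-manifold technique: one shows that the bridge point's local invariant manifolds are admissible graphs inside the Lyapunov chart of the block point $\tilde{r}$, hence must cross the block point's plaques. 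Some argument of this kind is indispensable in your proposal and is missing.

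Second, the lambda-lemma bridging is stated too loosely: forward iterates of the $\mu_{1}$-unstable plaque accumulate on the unstable plaques along the forward $\tilde{f}$-orbit of the \emph{single} lifted bridge point $\tilde{y}_{1}$ in $M^{f}$, whereas your density hypothesis concerns the total orbit --- a tree of pre-image branches --- not that single forward orbit. Unstable plaques attached to different branches over the same base point are genuinely different objects (you note this yourself), so ``accumulate on unstable plaques arbitrarily close to $\mathrm{supp}(\mu_{2})$'' does not follow from the lambda lemma alone: one must switch branches inside the tree. The device that makes this legitimate is that local \emph{stable} manifolds are branch-independent (they depend only on the forward orbit, and forward orbits in the tree eventually merge); this is exactly what the paper exploits in Lemma \ref{total transitivity in EHC}, together with the fact that the unstable part of the homoclinic class is defined with an existential quantifier over branches. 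Your write-up identifies branch dependence as the delicate issue but never supplies the mechanism that resolves it, so the bridge between the $\mu_{1}$-world and the $\mu_{2}$-world remains unconstructed.
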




The following theorem may be useful in constructing a unique SRB measure for endomorphisms.

\begin{theorem}\label{2}
Let $(f,\mu)$ be a measure dynamics with fixed $k-$index (stable index). Then hyperbolic pre-transitivity implies the uniqueness of SRB measures. 	
\end{theorem}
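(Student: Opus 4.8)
The plan is to prove Theorem \ref{2} by showing that hyperbolic pre-transitivity — the density of the total orbit $O_T(p)$ of a hyperbolic periodic point $p$ — forces any two hyperbolic SRB measures to coincide, using the homoclinic-relation machinery sketched in the introduction. First I would recall that, for a $C^2$-regular map, each hyperbolic SRB measure carries a system of unstable manifolds whose union has positive Lebesgue measure (the basin), and that an SRB measure is determined by the ergodic components supported on such unstable leaves. The key structural input from the pre-transitivity hypothesis is that the total orbit of the periodic point $p$ is dense; since $p$ is hyperbolic with a fixed stable $k$-index, I would build, along $O_T(p)$, a coherent stable subbundle, exploiting the observation made in the introduction that the forward Lyapunov exponents of $p$ and of its pre-images coincide, so that the stable directions can be transported consistently to every pre-image copy of $p$.

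Next I would establish a homoclinic-type relation between $p$ and the hyperbolic periodic points that carry the SRB measures. Concretely, for a hyperbolic SRB measure $\mu$, Pesin theory provides a positive-measure set of points possessing genuine stable and unstable manifolds (Pesin blocks), and by the density of $O_T(p)$ these local unstable manifolds of $p$-copies must accumulate on, and transversally intersect, the unstable/stable laminations of $\mu$. The aim is to show that this transversality, combined with the absolute continuity of the unstable conditional measures (the defining SRB property), yields a single homoclinic class to which the supports of all hyperbolic SRB measures belong. Then I would invoke the standard argument — analogous to the diffeomorphism case in \cite{13} and to \cite{MT} — that two SRB measures related through a common homoclinic class, whose unstable foliations are absolutely continuous and transversally intersect along a dense orbit, must have overlapping basins; since SRB basins of distinct measures are disjoint up to a Lebesgue-null set, overlapping forces equality, giving uniqueness.

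The main obstacle, and the step requiring the most care, is the passage from the diffeomorphism setting to endomorphisms: because $f$ is only a local diffeomorphism, a point has multiple pre-images and there is no single well-defined inverse branch, so the "total orbit" is a tree rather than a line and the stable manifold of $p$ is genuinely a family indexed by choices of pre-image branches. I would handle this by working in the inverse limit (natural extension) $\WT$ of $f$, where $f$ lifts to a homeomorphism $\tilde f$ and each SRB measure lifts to an invariant measure whose conditionals along the lifted unstable leaves remain absolutely continuous; the density of $O_T(p)$ downstairs corresponds to density of a full orbit of a periodic point for $\tilde f$ upstairs, restoring the diffeomorphism-style homoclinic argument. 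The delicate point is verifying that the transversal intersections and absolute continuity descend back to $M$ so that the uniqueness conclusion is about SRB measures of $f$ itself and not merely about their lifts; this is where the fixed $k$-index hypothesis is essential, since it guarantees that the stable subbundle has constant dimension along the whole total orbit and that the homoclinic relation is genuinely transversal in $\WT$.
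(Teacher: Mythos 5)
Your proposal follows essentially the same route as the paper's own proof: use the density of the total orbit of the hyperbolic periodic point $p$, lifted to the inverse limit space, to place pre-image copies of $p$ arbitrarily close to the (lifted) periodic point carrying any given hyperbolic SRB measure, obtain transversal intersections of the corresponding local stable/unstable manifolds there (the paper delegates this to its Lemma \ref{transversality}, via Katok-style admissible manifolds), and conclude that $p$ is homoclinically related to every SRB-carrying periodic point. The only difference is in the final bookkeeping: the paper concludes that the $k$-skeleton must have cardinality one and invokes the counting bound of \cite{MT} (Theorem \ref{pre-main}), whereas you re-derive the same conclusion directly from the basin-overlap and basin-disjointness argument that underlies that theorem.
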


 Following Theorem and Corollary emerge around the ergodicity and ergodic stability.
\begin{theorem} \label{main2}
	Let $f: M \rightarrow M$ be a $C^2-$ conservative regular map on a closed $n-$dimensional manifold (fixed $k-$index). Then hyperbolic transitivity, implies the ergodicity.
\end{theorem}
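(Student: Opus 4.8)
The plan is to exploit that in the conservative setting the reference (Lebesgue) volume $m$ is itself an $f$-invariant probability measure, and to show that hyperbolic transitivity forces $m$ to coincide with the unique SRB measure furnished by Theorem \ref{main1}. First I would record that, $f$ being conservative, $m$ is invariant and of full support. The heart of the matter is to upgrade the single dense non-uniformly hyperbolic total orbit provided by the hyperbolic transitivity hypothesis to the statement that $m$-almost every point is Lyapunov regular with all exponents nonzero and with stable index equal to the fixed value $k$; in other words, that $(f,m)$ is a hyperbolic measure dynamics. Here I would use the constancy of the index (a dominated-splitting type of control) together with Poincar\'e recurrence and the density of the hyperbolic orbit to propagate the nonvanishing of the exponents onto a full-measure set.

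Granting that $(f,m)$ is hyperbolic, I would then invoke Pesin theory for the $C^2$ volume-preserving map: the ergodic decomposition of $m$ consists of at most countably many ergodic components $m_1, m_2, \dots$, and because $m$ has absolutely continuous conditional measures on unstable manifolds (being the volume), each $m_i$ inherits absolutely continuous unstable conditionals and is therefore a hyperbolic SRB measure in the sense used throughout the paper. Each $m_i$ thus satisfies the hypotheses of Theorem \ref{main1}.

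Now the uniqueness result does the work. By Theorem \ref{main1} the hyperbolic SRB measure is unique, so all the ergodic components $m_i$ must be one and the same measure $\mu_0$. Since distinct ergodic components of a single invariant measure are mutually singular, there can be only one component, i.e.\ $m=\mu_0$ and $m$ is ergodic. Equivalently, $\mu_0$ is a physical measure whose basin, being invariant and of positive volume, must then have full $m$-measure; hence the Birkhoff averages of every continuous observable are $m$-almost everywhere constant and equal to its $\mu_0$-integral, which is ergodicity.

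The main obstacle I anticipate is precisely the first step: deducing almost-everywhere hyperbolicity of $m$ from the existence of just one dense hyperbolic total orbit. A single orbit is $m$-negligible, so one cannot directly transfer its exponents; the argument must use the fixed $k$-index and the conservativity/recurrence in an essential way to guarantee that no positive-$m$-measure set of points carries a zero exponent. Once hyperbolicity on a full-measure set is secured, the passage through Pesin's ergodic decomposition and the appeal to Theorem \ref{main1} are comparatively routine.
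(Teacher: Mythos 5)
Your proposal follows the same overall route as the paper: reduce everything to the uniqueness statement of Theorem \ref{main1}. The paper's own proof is a single sentence --- it asserts that, by Theorem \ref{main1}, in the conservative setting the unique hyperbolic SRB measure is the Lebesgue measure, whence ergodicity --- so your middle steps (Pesin's ergodic decomposition of the volume into at most countably many ergodic components, the observation that each component inherits absolutely continuous unstable conditionals from $m$ and is therefore a hyperbolic SRB measure of stable index $k$, and the conclusion via uniqueness that there can be only one component, so $m$ is ergodic) are exactly the details the paper leaves implicit. On that portion your write-up is correct and in fact more complete than the source.

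The point of divergence is your ``first step,'' which you rightly single out as the main obstacle: deducing that $(f,m)$ is a hyperbolic measure dynamics from the existence of one dense non-uniformly hyperbolic total orbit. The paper never performs this deduction; in its conventions (the standing ``measure dynamics'' framework and the qualifier ``(fixed $k$-index)'' in the statement of the theorem) the almost-everywhere nonvanishing of the Lyapunov exponents of the volume, with constant stable index $k$, is effectively part of the hypotheses rather than a conclusion. This matters because the strategy you sketch for deriving it --- Poincar\'e recurrence plus density of the hyperbolic orbit propagating nonzero exponents to a full-measure set --- cannot work: a single total orbit is $m$-null, density is a purely topological property, and Lyapunov exponents are neither continuous in the point nor controlled by approximation along orbits, so nothing prevents a positive-volume invariant set (elliptic-island-type behavior coexisting with a dense hyperbolic orbit, as in mixed conservative systems) from carrying a zero exponent. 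So keep your decomposition-plus-uniqueness argument, but replace your first step by the standing assumption that the Lebesgue measure has nonzero exponents almost everywhere with stable index $k$; with that reading your proof is complete and coincides with the paper's intended argument.
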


\begin{corollary} \label{main3}
	Let $f: M \rightarrow M$ be a $C^r-$ regular map of a closed $n-$dimensional manifold. Then a hyperbolic robust  transitivity implies the statistic stability and in case of conservative maps, the ergodic stability of the map.
\end{corollary}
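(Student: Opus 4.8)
The plan is to deduce this corollary from the uniqueness and ergodicity statements already established, so that the only genuinely new ingredient is a passage-to-the-limit argument. First I would unpack the hypothesis: hyperbolic robust transitivity means there is a $C^r$-neighborhood $\mathcal{U}$ of $f$ such that every $g \in \mathcal{U}$ is hyperbolically transitive in the weak sense of Theorem \ref{main1}, and (since the stable index is locally constant in this robust setting) all such $g$ share the same fixed $k$-index. By Theorem \ref{main1} together with Theorem \ref{2}, each such $g$ then carries a unique hyperbolic SRB measure $\mu_g$, so we obtain a well-defined assignment $\Phi:\mathcal{U}\to\mathcal{M}(M)$, $g\mapsto\mu_g$, into the space of Borel probability measures equipped with the weak-$*$ topology. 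Statistical stability is precisely the continuity of $\Phi$ at $f$, and this is the assertion I would prove.

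For the continuity I would argue sequentially. Take $g_n\to f$ in the $C^r$ topology and consider $\mu_{g_n}=\Phi(g_n)$. Since $M$ is compact, $\mathcal{M}(M)$ is weak-$*$ compact, so any subsequence of $(\mu_{g_n})$ has a further subsequence converging to some probability $\mu$, which is $f$-invariant because $g_n\to f$ uniformly. The crux is then to show that every such accumulation measure $\mu$ is a hyperbolic SRB measure for $f$. Granting this, uniqueness (Theorem \ref{main1}) forces $\mu=\mu_f$; since every convergent subsequence thus has the same limit, the whole sequence $(\mu_{g_n})$ converges to $\mu_f$, yielding continuity of $\Phi$ and hence statistical stability.

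The main obstacle is exactly this last step: showing that the SRB property survives under weak-$*$ limits. I would handle it through the Pesin entropy characterization, establishing the formula $h_\mu(f)=\int \sum_{\lambda_i(x)>0}\lambda_i(x)\,d\mu(x)$ for the limit measure (working, as throughout the paper, on the natural extension so that the unstable subbundle and its Jacobian are defined). The inequality $h_\mu(f)\le\int\sum\lambda_i^+\,d\mu$ is the Ruelle--Margulis inequality, valid for every invariant measure. For the reverse, I would use that each $\mu_{g_n}$ is SRB, hence satisfies $h_{\mu_{g_n}}(g_n)=\int\log J^u_{g_n}\,d\mu_{g_n}$, where $J^u=|\det Df|_{E^u}|$ is the unstable Jacobian. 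The robust hyperbolicity provides a dominated splitting varying continuously with $g$, so $\log J^u_{g_n}\to\log J^u_f$ uniformly and $\int\log J^u_{g_n}\,d\mu_{g_n}\to\int\log J^u_f\,d\mu$; combined with upper semicontinuity of the metric entropy along $g_n\to f$, this forces $\int\log J^u_f\,d\mu\le h_\mu(f)$, which closes the circle and gives the entropy formula for $\mu$. The same robustness keeps the Lyapunov exponents bounded away from zero uniformly in $n$, preventing the splitting from degenerating in the limit and guaranteeing that $\mu$ is again hyperbolic. This quantitative semicontinuity is the delicate heart of the argument.

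Finally, the conservative statement is essentially immediate and requires no new limiting argument. If $f$ preserves the volume and is robustly transitive, the neighborhood $\mathcal{U}$ may be taken inside the space of conservative $C^r$ maps, and Theorem \ref{main2} shows that every such $g\in\mathcal{U}$ is ergodic with respect to the volume. Thus ergodicity holds on an entire $C^r$-neighborhood of $f$, which is precisely the assertion of ergodic (stable) ergodicity, completing the proof.
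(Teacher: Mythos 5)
Your first and last paragraphs coincide with the paper's actual proof: robustness keeps weak hyperbolic transitivity for every $g$ in the neighborhood, Theorem \ref{main1} then gives each such $g$ the unique-SRB-measure property, and in the conservative case every nearby conservative map is ergodic (Theorem \ref{main2}), which is stable ergodicity. The paper stops there, because its own definition of statistical stability (Section 2) asks only that the SRB-measure property \emph{persists} under perturbation; no passage to the limit is attempted. So under the paper's reading of the statement, your opening paragraph already finishes the proof, modulo one point that affects the paper equally: Theorem \ref{main1} gives \emph{uniqueness}, not \emph{existence}, of the SRB measure for each $g$, so the assignment $g\mapsto\mu_g$ being well defined requires existence as a separate input.

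The middle of your proposal aims at the stronger, standard notion of statistical stability --- weak-$*$ continuity of $g\mapsto\mu_g$ at $f$ --- and this is where there are genuine gaps. The hypothesis is only weak hyperbolic transitivity: a single dense total orbit that is non-uniformly hyperbolic. This provides no dominated or continuous splitting on $M$; the unstable bundle $E^u$ is defined only measurably, along Oseledets-regular points of each measure, and it can degenerate as the measure varies. Hence the claim that $\log J^u_{g_n}\to\log J^u_f$ uniformly has no justification, and neither does the claim that the Lyapunov exponents of $\mu_{g_n}$ are uniformly bounded away from zero: non-uniform hyperbolicity of one orbit of $f$ imposes no uniform bound on the exponents of the SRB measures of nearby maps $g_n$. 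Likewise, upper semicontinuity of $(g,\mu)\mapsto h_\mu(g)$ fails for general $C^r$ maps; it needs extra structure such as $C^\infty$ smoothness, entropy-expansiveness, or uniform hyperbolicity, none of which follow from the hypotheses here. In short, the step you correctly identify as ``the delicate heart'' --- the SRB property surviving the weak-$*$ limit --- is precisely the step that cannot be executed from the stated assumptions; making it work is why statistical stability in the continuity sense is a substantial theorem even in far more structured settings (uniformly hyperbolic, or partially hyperbolic with mostly contracting center), and it is not what the corollary, as the paper understands it, asserts.
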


The paper is organized as follows. Some preliminaries on endomorphisms are presented in Section 2 while in Section 3 some known results are emphasized in the context of results obtained in \cite{MT}. In Section 4 the main results (Theorems 1, 2 and 3, Corollary 3.1) are proved. In Section 5 we give some examples.  




\section{Some Preliminaries on Endomorphisms} \label{preliminaries}
Let $M$ be a closed Riemannian manifold. By a $C^2-$endomorphisms $f:M\rightarrow M$ we mean a $C^{2}-$local diffeomorphism or a $C^2-$regular map ($|d_xf|\neq 0$, where $|.|$ is the determinant at point $x$). 

\textbf{\textit{Inverse Limit Space.}}
Let $f:M\rightarrow M$ be a $C^2-$regular map and let $\mathcal{M}_{f}(M)$ denotes the set of all $f-$invariant Borel probability measures.
For this $f$, consider the compact metric space
 $$M^{f}:=\{\tilde{x}=(x_{n})\in\prod_{-\infty}^{\infty}M : f(x_{n})=x_{n+1} \quad \text{for all} \quad n\in \mathbb{Z}\},$$
 equipped with the distance $\tilde{d},$ given by
 $$\tilde{d}(\tilde{x},\tilde{y}):=\sum_{n=-\infty}^{\infty}2^{-|n|}d(x_{n}, y_{n}),$$
 where $\tilde{x}=(x_{n}),\tilde{y}=(y_{n})\in M^{f}$ and $d$ is the distance on $M$ induced by the Riemannian metric. Such metric space is called the "Inverse Limit Space" of $f$. Let $\pi$ be the natural projection from $M^{f}$ to $M$ i.e, $\pi((x_{n}))=x_{0}, \forall \tilde{x}\in M^{f}$ and $\tilde{f}:M^{f}\rightarrow M^{f}$ be the shift homeomorphism. It is clear that  $\pi\circ \tilde{f}=f\circ \pi.$ 
 
 The $(M,f,\mu)$ is a probability space where $\mu$ is an $f-$invariant probability measure defined on the Borel $\sigma-$algebra induced by the metric property of $M$ and $f$ a $C^2-$regular map that as mentioned before, for simplicity we call it a "measure dynamics", and denote it by $(f,\mu)$ in many parts of the text.
 The invariant measure $\mu$ is called an ergodic measure, if any $f-$invariant subset of the $\sigma-$algebra, have $\mu-$measure 0 or 1. It happens that a dynamical system is ergodic with respect to some of its invariant measures. In a volume preserving (or conservative) setting, what plays an important feature, is the ergodicity of the "Lebesgue" measure. Hence by an "ergodic dynamical system", we mean an ergodic dynamics $(f,m)$ where $m$ is the Lebesgue measure, induced from Riemannian structure of the manifold.
 
 The following proposition of \cite{MT} is important.
 
 \begin{proposition}\label{1-1}
 	Let $f$ be a continuous map on $M$. For any $f-$invariant Borel probability measure $\mu$ on $M$, there exists a unique $\tilde{f}-$invariant Borel probability measure $\tilde{\mu}$ on $M^{f}$ such that $\pi_{*} \tilde{\mu}=\mu$. Moreover, $\mu$ is ergodic if and only if $\tilde{\mu}$ is ergodic.
 \end{proposition}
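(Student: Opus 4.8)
The plan is to recognize $(M^{f},\tilde f,\tilde\mu)$ as the Rokhlin natural extension of $(M,f,\mu)$ and to build $\tilde\mu$ by prescribing its values on cylinders. For $n\in\mathbb{Z}$ write $\pi_n(\tilde x)=x_n$, so that $\pi=\pi_0$ and $\pi_n=\pi_0\circ\tilde f^{n}$ for every $n$. Since $f(x_j)=x_{j+1}$, every coordinate of index $\ge -N$ is a function of $x_{-N}$, so each cylinder can be written as $\pi_{-N}^{-1}(B)$ for some $N\ge 0$ and some Borel set $B\subseteq M$; these sets form an algebra $\mathcal{C}=\bigcup_{N\ge 0}\pi_{-N}^{-1}(\mathcal{B}_M)$ generating the Borel $\sigma$-algebra of $M^{f}$. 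I would set $\tilde\mu\bigl(\pi_{-N}^{-1}(B)\bigr):=\mu(B)$. Well-definedness amounts to the identity $\pi_{-N}^{-1}(B)=\pi_{-(N+1)}^{-1}(f^{-1}B)$, whose two prescribed values $\mu(B)$ and $\mu(f^{-1}B)$ coincide precisely because $\mu$ is $f$-invariant; this is the only place invariance of $\mu$ is used in the construction. As $M^{f}$ is compact metric, hence Polish, the resulting additive set function is a tight premeasure, and Kolmogorov's consistency theorem (equivalently Carath\'eodory extension) promotes it to a Borel probability measure $\tilde\mu$ on $M^{f}$.

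Next I would check the three asserted properties on $\mathcal{C}$ and conclude by uniqueness of extensions. For invariance, $\tilde f^{-1}\pi_{-N}^{-1}(B)=\pi_{-(N-1)}^{-1}(B)=\pi_{-N}^{-1}(f^{-1}B)$ gives $\tilde\mu(\tilde f^{-1}C)=\mu(f^{-1}B)=\mu(B)=\tilde\mu(C)$; for the projection, $\pi_*\tilde\mu(A)=\tilde\mu(\pi_0^{-1}A)=\mu(A)$. For uniqueness, any $\tilde f$-invariant $\nu$ with $\pi_*\nu=\mu$ satisfies $\nu\bigl(\pi_{-N}^{-1}B\bigr)=\nu\bigl(\tilde f^{N}\pi_0^{-1}B\bigr)=\nu\bigl(\pi_0^{-1}B\bigr)=\mu(B)$, so $\nu=\tilde\mu$ on the generating algebra $\mathcal{C}$, hence everywhere. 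The easy half of the ergodicity equivalence is then immediate: if $A\subseteq M$ is $f$-invariant, then $\pi_0^{-1}(A)$ is $\tilde f$-invariant because $\pi_0\circ\tilde f=f\circ\pi_0$, with $\tilde\mu(\pi_0^{-1}A)=\mu(A)$, so ergodicity of $\tilde\mu$ forces $\mu(A)\in\{0,1\}$.

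The main obstacle is the converse, that ergodicity of $\mu$ lifts to $\tilde\mu$, where the non-invertibility of $f$ is the real difficulty. I would work with the increasing filtration $\mathcal{B}_{-n}=\pi_{-n}^{-1}(\mathcal{B}_M)$, which satisfies $\mathcal{B}_{-n}=\tilde f^{n}\mathcal{B}_0$ and $\bigvee_n\mathcal{B}_{-n}=\mathcal{B}_{M^{f}}$. Given an $\tilde f$-invariant $\tilde\varphi\in L^2(\tilde\mu)$, the identity $\mathbb{E}[\tilde\varphi\mid\tilde f^{n}\mathcal{B}_0]=\mathbb{E}[\tilde\varphi\circ\tilde f^{n}\mid\mathcal{B}_0]\circ\tilde f^{-n}$ combined with $\tilde\varphi\circ\tilde f^{n}=\tilde\varphi$ yields $\mathbb{E}[\tilde\varphi\mid\mathcal{B}_{-n}]=\hat g\circ\pi_{-n}$, where $\hat g\in L^2(\mu)$ is defined by $\hat g\circ\pi_0=\mathbb{E}[\tilde\varphi\mid\mathcal{B}_0]$. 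Martingale convergence then gives $\hat g\circ\pi_{-n}\to\tilde\varphi$ in $L^2(\tilde\mu)$, so it suffices to prove $\hat g$ constant. The tower property $\mathbb{E}[\hat g\circ\pi_{-n}\mid\mathcal{B}_{-(n-1)}]=\hat g\circ\pi_{-(n-1)}$ translates, through the Markov structure of $\tilde\mu$ in consecutive coordinates, into $U_f^{*}\hat g=\hat g$, where $U_f g=g\circ f$ is the Koopman isometry of $(M,f,\mu)$ and $U_f^{*}$ its transfer operator.

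To finish I would exploit that $U_f$ is an isometry of $L^2(\mu)$: from $U_f^{*}\hat g=\hat g$ one obtains $\langle\hat g,U_f\hat g\rangle=\langle U_f^{*}\hat g,\hat g\rangle=\|\hat g\|^2=\|\hat g\|\,\|U_f\hat g\|$, so the equality case of Cauchy--Schwarz forces $U_f\hat g=\hat g$, i.e. $\hat g\circ f=\hat g$ $\mu$-almost everywhere. Ergodicity of $\mu$ then makes $\hat g$ constant, whence $\tilde\varphi$ is constant and $\tilde\mu$ is ergodic. I expect the two delicate points to be the passage from the tower identity to $U_f^{*}\hat g=\hat g$ (exactly where the non-invertible dynamics is absorbed into the transfer operator) and the routine verification of tightness and countable additivity in the extension step.
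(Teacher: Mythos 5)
Your proof is essentially correct, but there is nothing in the paper to compare it against line by line: the paper does not prove this proposition at all, it quotes it from \cite{MT}, which in turn takes it from the standard theory of natural extensions for endomorphisms (cf.\ \cite{10}). So the relevant comparison is with that classical argument. Your construction of $\tilde\mu$ on the algebra $\mathcal{C}=\bigcup_{N\geq 0}\pi_{-N}^{-1}(\mathcal{B}_M)$ is exactly Rokhlin's natural-extension construction, and your verifications of invariance, of $\pi_*\tilde\mu=\mu$, and of uniqueness on the generating algebra are the standard ones. Where you genuinely diverge is the hard direction of the ergodicity equivalence: the classical route approximates an $\tilde f$-invariant set $\tilde A\subset M^{f}$ by cylinders $\pi_{-n}^{-1}(B)$ and shows the approximating $B$ is almost $f$-invariant, then invokes ergodicity of $\mu$; you instead run an increasing-martingale argument, identify $\mathbb{E}[\tilde\varphi\mid\mathcal{B}_{-n}]=\hat g\circ\pi_{-n}$, and convert the tower property into the transfer-operator equation $U_f^{*}\hat g=\hat g$, finishing via the isometry property of the Koopman operator. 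This functional-analytic route is clean and correct; note the final Cauchy--Schwarz step can be shortened, since $\|U_f\hat g-\hat g\|^{2}=\|U_f\hat g\|^{2}-2\,\mathrm{Re}\langle U_f\hat g,\hat g\rangle+\|\hat g\|^{2}=0$ follows directly from $U_f^{*}\hat g=\hat g$ and isometry. Two points you should tighten: (i) well-definedness of the premeasure is not only the consistency $\pi_{-N}^{-1}(B)=\pi_{-(N+1)}^{-1}(f^{-1}B)$ across levels; when $f$ is not surjective, distinct Borel sets $B\neq B'$ can satisfy $\pi_{-N}^{-1}(B)=\pi_{-N}^{-1}(B')$, because $\pi_{-N}(M^{f})=\bigcap_{k\geq 0}f^{k}(M)$ may be a proper subset of $M$; this is repaired by observing that invariance of $\mu$ forces $\mu\bigl(\bigcap_{k\geq 0}f^{k}(M)\bigr)=1$, so the ambiguity is $\mu$-null. (ii) The countable additivity on $\mathcal{C}$ that you defer to ``tightness'' does need an actual argument --- inner regularity of $\mu$ on the compact metric space $M$ together with compactness of $M^{f}$ --- which is precisely what legitimizes the Kolmogorov/Carath\'eodory extension here; since you flagged it as routine and it is indeed standard in this compact setting, this is an omission of detail rather than a gap in the idea.
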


\textbf{\textit{Hyperbolicity.}} Let $f$ be a $C^{2}-$endomorphism and $\Lambda$ be an $f-$invariant closed subset of $M$. One defines 
$$\Lambda^{f}:=\{\tilde{x}=(x_{n})\in M^{f} :  x_{n}\in\Lambda, \,\,for\, all\,\, n\in \mathbb{Z}\}.$$

Then $\Lambda$ is called a \textit{Hyperbolic Set} if there exists real constants $C>0$ and $0<\mu<1$ such that $\forall\x\in\Lambda^{f}$ and for every integer $n. m\in \ZZ^{+}$ we have:
\begin{itemize}
	\item $T_{x_{n}}M= E^{s}(\tilde{x},n)\oplus E^{u}(\tilde{x},n)$,
	\item $Df(E^{s}(\tilde{x},n))= E^{s}(\tilde{f}(\tilde{x}),n)=E^{s}(\tilde{x},n+1),$\\
	$\Vert Df^{m}_{x_{n}}(v)\Vert\leq C\mu^{m}\Vert v\Vert,$ for $\,v\in E^{s}_{x_{n}}$,
	\item $Df(E^{u}(\tilde{x},n))= E^{u}(\tilde{f}(\tilde{x}),n)=E^{u}(\tilde(x),n+1),$\\
	$ \Vert Df^{m}_{x_{n}}(v)\Vert\geq [C\mu^{m}]^{-1}\Vert v\Vert,$ for $\,v\in E^{u}_{x_{n}}.$
\end{itemize}

In above definition an example can be $\Lambda=\{p,f(p),...,f^{n}(p)\}$, the real orbit of a periodic point of period $n.$ Then its hyperbolicity means the hyperbolicy of $\Lambda^{f}=\{\bar p\}$ by definition. We define the "\textbf{Total orbit}" of a periodic point as
$$O_T(p)=\{f^{n}(p): n\in \ZZ\}.$$

In a similar way the total orbit of any other point $x$, denoting by $O_T(x)$, can be defined (by $f^{-1}$ we mean the pre-image).
We say that $f$ is an \textit{"Anosov map"} if $f$ is hyperbolic on $M$. 

Now for $x\in M$ and $f:M\rightarrow M$ as above, suppose $v\in T_{x}M$. We put
$$\lambda(x,v)=\overline{\lim}_{n\rightarrow\infty}\frac{1}{n} \log\Vert d_{x}f^{n}(v)\Vert.$$
The number $\lambda(x,v)$ is called the \textit{Lyapunov Exponent for $v$}. For $x\in M$ there are
at most (dim$M$)-numbers $\lambda_{1}(x),$ $\cdots,$ $\lambda_{r(x)}(x)$ with $-\infty<\lambda_{1}(x)<\cdots<\lambda_{r(x)}<\infty.$ Also there is a
filtration of subspaces
$$\{0\}=L_{0}(x)\subsetneq L_{1}(x)\subsetneq\cdots\subsetneq L_{r(x)}(x)=T_{x}M,$$
where $L_{i}(x)=\{v\in T_{x}M: \lambda(x,v)\leq \lambda_{i}(x),\,\textit{for}\, 1\leq i\leq r(x)\}$.

We have $\lambda(x,v)=\lambda_{i}(x)$, holds for $v\in L_{i}(x)\backslash L_{i-1}(x),$ $1\leq i\leq r(x)$ and the numbers $\lambda_{1}(x)$ ,$\cdots,$
$\lambda_{r(x)}(x)$ are called the \textit{Lyapunov exponents at $x$}. We set
$$m_{i}(x)=\dim L_{i}(x) -\dim L_{i-1}(x)$$
for $1\leq i\leq r(x)$ and it is the \textit{multiplicity} of Lyapunov exponent $\lambda_{i}(x)$. Observe that $r(x),$ $\lambda_{i}(x),$ $m_{i}(x)$ are measurable $f$-invariant functions with respect to any $f$-invariant Borel probability measure $\mu$ and once $\mu$ is an ergodic measure, then these functions become constant almost everywhere. In the context of non-uniform hyperbolicity, the Lyapunov exponents support an important pattern.

Before mentioning the definition of a non-uniformely hyperbolic set, let us bring the definition of a hyperbolic measure.

\textbf{\textit{Hyperbolic Measure.}}\label{hyp}
An ergodic $f-$invariant Borel probability measure $\mu$, is called hyperbolic, if:

\begin{itemize}
	\item none of the Lyapunov exponents for $\mu$ are zero;
	\item there exist Lyapunov exponents with different signs.
\end{itemize}


\textbf{\textit{Non-Uniformly Hyperbolic Set (NUH).}} We say that the $f-$invariant subset $\Lambda\subset M,$ is "Non-Uniformly hyperbolic" (NUH) if considering $\Lambda^{f}\subset M^{f}$ that $\pi(\Lambda^{f})=\Lambda$, then there exist \\
(a) numbers $\beta,\,\theta$ such that $0<\beta<1<\theta $;\\
(b) a number $\epsilon>0$ and Borel functions $C,K:\, \Lambda^{f}\rightarrow (0,\infty)$;\\
(c) subspaces $E^s(\tilde{x},n)$ and $E^u(\tilde{x},n)$ for each $\tilde{x}=(x_{n})\in \tilde{\Lambda}$, which satisfy the following conditions $\forall n\in\ZZ$:

\begin{enumerate}
	\item $T_{x_{n}}M=E^s(\tilde{x},n)\oplus E^u(\tilde{x},n)$,\ \ \
	\item
	$\begin{cases}
	d_{x_{n}}f\,E^s(\tilde{x},n)=E^s(\tilde{f}(\tilde{x}),n)=E^{s}(\tilde{x}, n+1);\\
	
	d_{x_{n}}f\,E^u(\tilde{x},n)=E^u(\tilde{f}(\tilde{x}),n)=E^{u}(\tilde{x}, n+1);
	\end{cases}$
	\item the subspace $E^s(\tilde{x},n)$ is stable: \\
	for $v\in E^s(\tilde{x},n)$ and $m>0$,\ \ \ \ \ \
	$\Vert d_{x_{n}}f^{m}(v)\Vert\leq C(\tilde{x}) \beta^{m}e^{\epsilon(m+|n|)}\Vert v \Vert$;
	\item the subspace $E^u(x)$ is unstable: \\
	for $v\in E^u(\tilde{x},n)$ and $m<0$,\ \ \ \ \ \ $\Vert d_{x_{n}}f^m(v)\Vert\leq C(\tilde{x})\theta^{m}e^{\epsilon(|m|+|n|)}\Vert v\Vert$;
	\item $\angle(E^s(\tilde{x},n),E^u(\tilde{x},n))\geq K(\tilde{x})$;
	\item $ C(\tilde{f}^m(\tilde{x}))\leq C(\tilde{x})e^{\epsilon |m|},\,\,\,\,\,\,K(\tilde{f}^m(\tilde{x}))\geq K(\tilde{x})e^{-\epsilon\,|m |}.$
\end{enumerate}

\begin{remark}\label{s-u NUH}
Around the existence of well-defined local stable-unstable manifolds for a NUH set, we refer to Theorems 4.1.1 and 4.1.2 of \cite{7}, which guarantee the existence of local manifolds tangent to invariant sub-bundles. After some suitable adaptations it is possible to obtain such results for endomorphisms specially in case of local diffeomorphisms. We refer to \cite{10} for more related details. 
\end{remark}

By celebrated Oseledet\'s Multiplicative
Ergodic Theorem for $C^1-$differentiable maps \cite{10}, for any $f-$invariant measure $\mu$, there exists a full $\mu-$measure subset of Lyapunov regular points. Due to its adapted results for endomorphisms (see \cite{MT} or \cite{10}), this full measure subset lives on $M^f$ and we denote it by $\tilde{\RR}$. We may denote the adapted version of the theorem for endomorphisms on inverse limit space, by $\widetilde{MET}$.


	\begin{remark}\label{regular NUH set}
		The set of all points from $\widetilde{MET}$ is called \textit{"Lyapunov regular set"}. Let us consider $\mu$ being a hyperbolic measure. Then the set of Lyapunov regular points without zero Lyapunov exponents, contains a non-uniformly hyperbolic set of full $\tilde{\mu}-$measure with 
		$$\beta=\beta^{\mu},\,\,\,\theta=\theta^{\mu},\,\,\,C(\x)=C(\x,\epsilon),\,\,\,K(\x)=K(\x,\epsilon)$$
		
		for $0<\epsilon\leq\epsilon_0.$
		Without loss of generality from now on, let us consider $\tilde{\RR}$ being the full measure, non-uniformly hyperbolic subset of Lyapunov regular points on $M^{f}$, for a measure dynamics $(f,\mu)$, and which, we denote its projection on $M$ by $\RR$ ($\pi(\tilde{\RR})=\RR$).
	\end{remark}
	
	
Let suppose $(f,\mu)$ a measure dynamics with $\mu$ a hyperbolic measure. Let $\beta=\beta^{\mu}$ (resp $\theta=\theta^{\mu}$) be the least in modulus positive (resp. negative) Lyapunov exponents. Suppose that $\mu$ has $k$ negative Lyapunov exponents.\\

	\textbf{\textit{Pesin Blocks.}}\label{1-08} Fix $0 < \epsilon \ll 1.$ Given $l>0$, we define a Pesin block (Regular set) $\tilde{\Delta}_{l}$ as:
	
	$$\tilde{\Delta}_{l}:=\{\x\in \tilde{\RR}: C(\x,\epsilon)\leq l,\,\,\,\,K(\x,\epsilon)\geq\frac{1}{l}\}.$$
	These subsets can be viewed as non-invariant uniformly hyperbolic sets and as a subset of $M^{f}$ consisting of
	$\tilde{x}=(x_{n})\in M^{f}$ for which there exists a sequence of splittings $T_{x_{n}}M=E^{s}(\tilde{x}, n)\oplus E^{u}(\tilde{x}, n)$,$n\in \ZZ$. Some of their properties are:

	
	\begin{itemize}
		\item $\tilde{\Delta}^{l}\subset \tilde{\Delta}^{l+1},$ $\dim E^{s}(\tilde{x}, n)=k$;
		
		\item the subspaces $E^{s}(\tilde{x},n)$ and $E^{u}(\tilde{x},n)$ depend continuously on $\x\in \tilde{\Delta}^{l},$
		\item we consider the closure of Pesin blocks $\tilde{\Delta}^{l}:=\overline{\tilde{\Delta}^{l}}$ and for $\x \in \overline{\tilde{\Delta}^{l}},$\\  $D_{x_{n}}f(E^{s}(\tilde{x},n))= E^{s}(\tilde{x}, n+1),$ $D_{x_{n}}f(E^{u}(\tilde{x}, n))=E^{u}(\tilde{x}, n+1)$;
		\item moreover for $m\geq 0, \, v\in E^{s}(\tilde{x}, n)$ and $w\in E^{u}(\tilde{x}, n);$\\
		
		$\begin{cases}
		\Vert D_{x_{n}}f^{m}(v)\Vert\leq l \beta^m e^{(\epsilon |n|)}\Vert v \Vert,\forall n\in\ZZ,n\geq 1\\
		
		\Vert (D_{x_{n-m}}f^{m}|_{E^{u}(\tilde{x},n-m)})^{-1}(w) \Vert\leq l\theta^m e^{(\epsilon |n-m|)}\Vert w \Vert,\forall n\in\ZZ,n\geq 1;
		\end{cases}$\\
		\item $\angle(E^{s}(\tilde{x}, n),$ $E^{u}(\tilde{x}, n))\geq \frac{1}{l}$.
	\end{itemize}
	Let us denote this new closed, $f$-invariant set by $\tilde{\QQ}:= \bigcup_{l\geq 1}\tilde{\Delta}^{l}.$




\textbf{\textit{Weak Hyperbolicity.}} Let $f:M\to M$ be a $C^2-$ regular map. Then we say that $f$ has a weak hyperbolicity, if there exists some $x\in M$ such that the $f-$invariant total orbit subset $O_{T}(x)$, is non-uniformly hyperbolic due to given definition of NUH.

\begin{remark}[\textit{\textbf{Important}}]\label{important}
Observe that a weak hyperbolic total orbit, $O_{T}(x)$, despite the fact that can contain non-regular orbits, the subset $O_{T}^{f}(x)$ may live in the closure of Pesin blocks as well as it may not. Once they live inside such good blocks, we may get the properties such as continuous variation of local stable-unstable manifolds depending on $\x\in \tilde{\Delta}_l$, also their transversality in a continuous variation. If not, let need some more technical tools to show such properties in case, where we mention it in Section \S\ref{proof}.
Observe that in any case, Remark \ref{s-u NUH}, will guarantee us, the existence of well-defined local stable-unstable manifolds along any  orbits $\y\in O_{T}^{f}(x).$
\end{remark}

\textbf{\textit{Topological-Transitivity.}} An endomorphism $f: M\rightarrow M$ is said to be \textit{"Top-Transitive"} if for any $U,V$ open subsets of $M$, there exists some $n>0$ such that $f^{n}(U)\cap V \neq \emptyset$. 

\textbf{\textit{Weak Hyperbolic Transitivity.}} We say $x$ has a weak hyperbolic transitivity, if $O_T(x)$ is a weak hyperbolic subset of $M$. 

In above definition observe that, either the density of the forward orbit of $x$ or backward orbit of some $\x\in \pi^{-1}(x)$ implies the transitivity.

We may use the following definition of J. Milnor in examples section.

\begin{remark}[Measure-transitivity]\label{measure transitivity}
	Let $(M,f,\mu)$ being a measure dynamics (see \S \ref{preliminaries}). Then there is an almost equivalent definition of ergodicity which is "Measure-transitivity". A dynamical system is said to be measure transitive, if for any $U,V\subset M,$ where $\mu(U)>0,\,\mu(V)>0$, then there exist some $n>0$ such that $f^n(U)\cap V\neq\emptyset.$
	
	Observe that although this definition implies the top transitivity, but the inverse is not necessarily correct. 
\end{remark}



Let us bring the following Dichotomy of transitivity from \cite{LPV}.

\begin{lemma}[\textbf{Dichotomy of Transitivity}]. \label{LPV}
If $f\in C^{0}(M,M)$ then only one of the following properties hold: either $\omega_{f}(x)\neq M$ for all $x\in M$ or the set 
$\{x\in M: \omega_{f}(x)=M\}$ is a residual subset of $M$. Moreover when $f$ is transitive, then $\Gamma:=\{x\in M: \alpha_f(x)=\omega_{f}(x)=M\}$ is a residual subset of $M.$	
\end{lemma}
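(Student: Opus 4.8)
The plan is to deduce this from the Birkhoff transitivity theorem together with the Baire category theorem, using that $M$ is compact, second countable and has no isolated points. Fix once and for all a countable basis $\{U_i\}_{i\geq 1}$ of nonempty open sets of $M$. For the dichotomy itself, the two alternatives are clearly mutually exclusive (a residual set is dense, hence nonempty, so it cannot happen that $\omega_f(x)\neq M$ for every $x$), so it suffices to prove the single implication: if $\{x:\omega_f(x)=M\}$ is nonempty then it is residual. Suppose $\omega_f(x_0)=M$ for some $x_0$; then the forward orbit of $x_0$ meets every $U_i$ infinitely often. From this I would first record that $f$ is topologically transitive: given nonempty open $U,V$, pick $a$ with $f^a(x_0)\in U$ and then $b>a$ with $f^b(x_0)\in V$, so that $f^{b-a}(U)\cap V\neq\emptyset$ with $b-a>0$.

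Next I would write the target set intrinsically as $\{x:\omega_f(x)=M\}=\bigcap_i\bigcap_N\bigcup_{n\geq N}f^{-n}(U_i)$. Each $\bigcup_{n\geq N}f^{-n}(U_i)$ is open, and transitivity makes it dense (for any open $V$, transitivity gives $n>N$ with $f^n(V)\cap U_i\neq\emptyset$, i.e. $V\cap f^{-n}(U_i)\neq\emptyset$); hence the set is a countable intersection of dense open sets, so residual by Baire. At this point I would use that, $M$ having no isolated points, a dense forward orbit automatically has $\omega_f(x)=M$, since deleting finitely many points from a dense subset of a perfect space leaves it dense. This closes the first half.

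For the ``moreover'' statement the difficulty is that $f$ need not be invertible, so $\alpha_f$ is only meaningful after passing to the inverse limit. I would lift to $M^f$, where $\tilde f$ is a homeomorphism and $\pi\circ\tilde f^n=f^n\circ\pi$, and take $\alpha_f$ to be the projection of the $\alpha$-limit of $\tilde f$. One checks that transitivity of $f$ lifts to transitivity of $\tilde f$ on $M^f$; since $\tilde f$ is a homeomorphism, $\tilde f^{-1}$ is transitive as well, because $\tilde f^n(\tilde U)\cap\tilde V\neq\emptyset$ is equivalent to $\tilde f^{-n}(\tilde V)\cap\tilde U\neq\emptyset$. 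Applying the first half to $\tilde f$ and to $\tilde f^{-1}$ produces two residual subsets of $M^f$, namely $\{\tilde x:\omega_{\tilde f}(\tilde x)=M^f\}$ and $\{\tilde x:\alpha_{\tilde f}(\tilde x)=M^f\}$; their intersection is again residual, and by continuity, surjectivity and equivariance of $\pi$ the condition $\omega_{\tilde f}(\tilde x)=M^f$ forces $\omega_f(\pi(\tilde x))=M$, and similarly for $\alpha$, so the image lies in $\Gamma$.

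The step I expect to be the main obstacle is exactly this transfer between $M^f$ and $M$: a continuous image of a residual set need not be residual, so I cannot simply project the residual subset of $M^f$ and declare the result residual in $M$. My intended remedy is to avoid projecting sets altogether and instead rerun the countable-basis and Baire-category computation directly on $M$, exhibiting $\Gamma$ itself as a countable intersection of dense open subsets of $M$ built from the $U_i$ and their preimages, using the equivariance relation $\pi\circ\tilde f^n=f^n\circ\pi$ to express the backward (preimage) condition intrinsically on $M$. This keeps the whole category argument on $M$ and only uses the inverse limit to make $\alpha_f$ well defined, rather than to carry residuality across $\pi$.
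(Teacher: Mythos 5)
The paper gives no proof of this lemma at all: it is imported from \cite{LPV} as a known result (followed only by a remark on notation), so there is no internal argument to compare against and your proposal has to stand on its own. Your first half does stand: the two alternatives exclude one another because a residual subset of a Baire space is nonempty, and once some $x_0$ has $\omega_f(x_0)=M$ the Baire-category computation with the countable basis $\{U_i\}$ is the standard and correct route. One imprecision to flag: the paper's definition of transitivity produces \emph{some} $n>0$, not an $n>N$, so the density of $\bigcup_{n\geq N}f^{-n}(U_i)$ does not follow from bare transitivity as you assert; you must either use $x_0$ itself (pick $a$ with $f^{a}(x_0)\in V$, then $b\geq a+N$ with $f^{b}(x_0)\in U_i$) or invoke the perfectness of $M$ exactly as in your closing sentence. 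Both ingredients are already on your page, so this is cosmetic.

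The ``moreover'' half has a genuine gap, and it sits exactly where you predicted it would. Since you (correctly) refuse to project residual subsets of $M^f$ down to $M$, the inverse-limit steps carry no weight and the whole proof is the one-sentence remedy --- and, as phrased, that remedy fails. The $\alpha$-condition on $M$ is \emph{not} expressible ``from the $U_i$ and their preimages'': the preimages $f^{-n}(U_i)$ encode only the $\omega$-condition. The statement that $x$ has a preimage of depth $n$ in $U_i$ reads $x\in f^{n}(U_i)$, so the relevant sets are forward \emph{images}, and these are open only when $f$ is an open map --- true for the paper's regular maps, but not for the $C^{0}$ maps in the statement, so the lemma as stated is not reached by this route at all. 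More seriously, even for a local diffeomorphism, running Baire on the sets $f^{n}(U_i)$ proves residuality of the set of $x$ whose \emph{full preimage tree} accumulates on all of $M$, whereas your $\alpha_f(x)$ was defined through the inverse limit, i.e.\ along a \emph{single} backward branch. These two notions do not visibly coincide: knowing that every $U_i$ is hit by some preimage at some arbitrarily large depth does not produce one backward orbit doing all of this simultaneously, since different targets may be reached along incompatible branches, and the obvious inductive branch construction needs the accumulation property at the intermediate preimages, not just at $x$ (the residual set in question is forward invariant but not backward invariant, so this is not granted). To close the proof you must either adopt the full-preimage definition of $\alpha_f$ outright --- in which case the inverse-limit detour is unnecessary and the image-sets argument finishes, for open maps --- or supply a genuine additional argument (a branch-selection/compactness argument, or a Kuratowski--Ulam-type transfer of residuality along the projection $\pi$, which is open when $f$ is); your sketch contains neither.
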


\begin{remark}\label{remark:LPV}
In above lemma the $\omega_{f}(x)$ and $\alpha_f(x)$ are the $\omega-$limit and $\alpha-$limit sets of a point $x\in M$ with their usual definition. 
\end{remark}

\textbf{\textit{Pre-Transitivity.}} An endomorphism $f: M\rightarrow M$ is said to be \textit{"Pre-Transitive"} if there exists some $x\in M$ with $O_T^{-}{x}$ (the set of pre-images of $x$ under $f$) is dense in $M$.

\begin{remark}
	Observe that one can shows that pre-transitivity and transitivity are somehow equivalent on a complete metric space. Despite this fact we prefer to use it just for case of periodic points. Meaning that pre-transitivity for us is assumed to be the existence of a dense total orbit of a periodic point and once for other points, we mention it. 
\end{remark}


\textbf{\textit{Robust Pre-Transitivity.}} A map $f:M\to M$ is said to be $C^1-$robustly pre-transitive, if there exists a $C^{1}$ open neighborhood $\mathcal{U}(f)$ in space of all $C^2-$regular maps, such that any $g\in \mathcal{U}(f)$ is pre-transitive. 

In other words it is enough to say that if $p_f$ is the pre-transitive periodic point of $f$, then there exist some pre-transitive periodic point $p_g$ for $g$ in a $C^1$ continuation of $f$.

\textbf{\textit{Roboust Weak Hyperbolic Transitivity.}} By a $C^1-$ roboust weak hyperbolic transitivity, we may refer to the existence of an open ball (in the $C^1-$ topology), where the weak hyperbolic transitivity is preserved.

\textbf{\textit{Topological-Mixing.}} We say that an endomorphism $f: M\rightarrow M$ has the property of \textit{"Top-Mixing"} if for any $U,V$ open subsets of $M$, there exists some $N>0$ large enough that for any $n>N$, the $f^{n}(U)\cap V \neq \emptyset$. 






\begin{lemma}\label{pre-transitivity}
Let $f$ be an endomorphism with pre-transitive periodic points, then some power of $f$ is top-mixing.
\end{lemma}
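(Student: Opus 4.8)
The plan is to reduce the statement to the classical cyclic (spectral) decomposition of a transitive map with a periodic point, and to isolate the passage from transitivity to mixing as the real content. Write $k$ for the period of $p$, so $f^{k}(p)=p$. Since the forward orbit $\{p,f(p),\dots,f^{k-1}(p)\}$ is finite and each of its points is itself a preimage of $p$ (indeed $f^{k-j}(f^{j}(p))=p$), density of $O_{T}(p)$ is equivalent to density of the full preimage set $S:=\bigcup_{m\geq 0}f^{-m}(p)$. By the Dichotomy of Transitivity (Lemma \ref{LPV}) together with the remark identifying pre-transitivity with transitivity on a complete metric space, I would first pass to the assumption that $f$ is (forward) topologically transitive. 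Note also that a $C^{2}$ regular map on a closed manifold is open and, being a proper local diffeomorphism of a connected compact manifold, is a surjective covering map; since the case of a homeomorphism makes the hypothesis vacuous (the preimages of a fixed point would reduce to a single backward orbit), the genuine content is for non-invertible $f$, i.e. degree $\geq 2$.

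Next I would build the decomposition directly from the preimage structure. For $x\in S$, the set of $m\geq 0$ with $f^{m}(x)=p$ has the form $\{m_{0}+jk:\ j\geq 0\}$ because $f^{k}(p)=p$, so the level $\ell(x):=m_{0}\bmod k$ is well defined and satisfies $\ell(f(x))=\ell(x)-1$. Setting $S_{j}=\{x\in S:\ \ell(x)=j\}$ and $X_{j}=\overline{S_{j}}$, the sets $S_{j}$ are disjoint, $f(S_{j})\subseteq S_{j-1}$, and $\bigcup_{j}X_{j}=\overline{S}=M$, with $f(X_{j})\subseteq X_{j-1}$; transitivity and openness are then used to see that the $X_{j}$ have pairwise nowhere-dense intersection, so the pieces are genuinely cyclically permuted. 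Put $g=f^{k}$. Then each $X_{j}$ is $g$-invariant, $p\in X_{0}$ is a fixed point of $g$, and the $g$-preimages of $p$ are exactly $S_{0}$, hence dense in $X_{0}$; the analogous statement holds on every $X_{j}$. A useful structural fact here is that, because $p$ is $g$-fixed, one has $g^{-m}(p)\subseteq g^{-(m+1)}(p)$, so these preimages form an increasing family whose union is dense.

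The hard part will be to upgrade transitivity to mixing on each piece. On $Y:=X_{0}$ the map $g$ is open and transitive and has a fixed point $p$ whose increasing preimages are dense. The easy half is immediate: density and monotonicity give an $a$ with $p\in g^{n}(U)$ for all $n\geq a$ (so $g^{n}(U)$ is an open neighbourhood of $p$), and symmetrically $V$ meets $g^{-m}(p)$ for all large $m$. The obstacle is to show that these neighbourhoods of $p$ spread so as to meet the \emph{fixed} open set $V$ for \emph{all} large times, which is exactly the requirement that $g$ have no residual cyclicity on $X_{0}$, i.e. that it be genuinely mixing rather than merely transitive. I expect this to require essential use of the covering/openness structure of the regular map $f$, its branching (degree $\geq 2$), and the connectedness of $M$, since for a general open map the purely topological hypotheses do not force forward images of a small set to expand. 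The conclusion is then best read as: $f^{k}$ is topologically mixing on each element $X_{j}$ of the cyclic decomposition, which is the sense in which a power of $f$ is top-mixing (the components being permuted cyclically by $f$ itself).
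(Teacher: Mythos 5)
Your proposal contains a genuine gap, and you flag it yourself: the passage from transitivity to mixing is never carried out. You write that ``the hard part will be to upgrade transitivity to mixing on each piece'' and that you \emph{expect} this to require the covering structure, the degree, and the connectedness of $M$, but no such argument is given. Since top-mixing demands $f^{n}(U)\cap V\neq\emptyset$ for \emph{all} large $n$, this deferred step is the entire content of the lemma; everything you actually prove (the level decomposition, the ``easy half'' that $p\in g^{n}(U)$ eventually) is of transitivity type. Moreover, your closing reinterpretation --- that $f^{k}$ is mixing on each element $X_{j}$ of a cyclic decomposition permuted by $f$ --- is not ``the sense in which'' the lemma holds; it is exactly the obstruction the lemma denies. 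If the decomposition were nontrivial (all $X_{j}\neq M$, pairwise intersections nowhere dense, as you arrange), then two open sets lying each in one piece and in no other could meet under $(f^{N})^{i}$ only for $i$ in a fixed congruence class mod $k$, so \emph{no} power of $f$ would be top-mixing on $M$. A proof along your lines must therefore show the decomposition is trivial, i.e.\ that each $S_{j}$ is dense; the proposal leaves precisely this open.

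The paper's proof takes a much shorter route, made possible by reading the hypothesis more strongly than you do: it assumes periodic points are \emph{dense} and each has dense preimage set. Given $U,V$, density places a periodic point $q$ with $f^{m}(q)=q$ inside $U$; pre-transitivity of $q$ gives $q_{i}\in V$ and $n$ with $f^{n}(q_{i})=q$. Setting $g=f^{m}$, for $jm\geq n$ one has $g^{j}(q_{i})=f^{jm-n}(q)$, a point of the finite orbit $\mathcal{O}(q)$ which is moreover \emph{constant} in $j$ (it equals $f^{(-n)\bmod m}(q)$, because $g$ fixes every point of $\mathcal{O}(q)$). Hence $g^{j}(V)$ contains a fixed point of $\mathcal{O}(q)$ for all large $j$: no spreading or expansion of open sets is needed, the periodic orbit sitting inside $U$ is itself the witness of mixing. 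This is the mechanism your proposal is missing, and it is unavailable to you because with a single pre-transitive periodic point one cannot place a periodic orbit inside an arbitrary $U$. (The paper's write-up is itself terse: the constant point is $f^{(-n)\bmod m}(q)$ rather than $q$, so strictly one must still adjust $n$ modulo $m$, or replace $U$ by a suitable $f^{-i}(U)$, again using density of periodic points; but that is bookkeeping, not a new idea.) In short, you reduced the lemma to the assertion that an open transitive map with a fixed point having dense, increasing preimages is mixing --- an assertion you neither prove nor is obviously true --- whereas the paper's stronger hypothesis short-circuits that reduction entirely.
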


\begin{proof}
Let $f:M\to M$ be a dynamical system with dense periodic points such that each one has pre-transitivity. Let $U,V$ two open subsets of $M$. Let $q$ be some periodic point such that $f^m(q)=q$. Then $f^{m}$ is top-mixing. It comes from the fact that by pre-transitivity of $q$, there exists some $q_i,n$ such that $q_i\in V$ and $f^n(q_i)=q$. Therefore taking $f^m:=g$, for $N\geq mn$ we have that $g^{n}(V)\cap U\neq \emptyset$ for all $n>N.$
\end{proof}

\textbf{\textit{Statistical Stability.}} We say that $f$ is "Statistically Stable", if for a small perturbation, it persists its measures with SRB property (see Definition 3.5 from \cite{MT}). In other words we say that $f$, in the world of regular $C^{2}$ conservative maps is stably ergodic if there exists a $C^2$ neighborhood $\mathcal{V}(f)$ of $f$ such that Lebesgue measure is ergodic for any $g \in \mathcal{V}(f)$.

From a stochastic point of view, it is a conjecture that if there exists a unique SRB measure, then the system is stochastically stable. In a future work let tread this approach to see if a robust weak hyperbolic transitivity may also implies the stochastic stability of the dynamical system.  

\section{Some known Results}

In this section we mention some important definitions and results which are useful for the proof of theorems. 

Following definitions and result are from \cite{MT}.

\subsection{Ergodic Homoclinic Classes}\label{4-0}
Let $p \in M$ be a hyperbolic periodic point with period $n$. 
We define the \textit{Ergodic Homoclinic Class} of $p$ on both inverse limit space and $M.$ 
Recall that $\bar{p}$ is the unique periodic point of $\tilde{f}$ such that $\pi(\bar{p})=p.$

\begin{definition}[\textbf{EHC and $\widetilde{EHC}$}] \label{ergodic homoclinic classes}
	The inverse limit Ergodic Homoclinic Class ($\widetilde{EHC}$) is defined as $\tilde{\Lambda}(\bar{p}):=\tilde{\Lambda}^{s}(\bar{p})\cap\tilde{\Lambda}^{u}(\bar{p})$ where $\bar{p}= (\cdots, p, f(p), \cdots f^{n-1}(p), p, \cdots).$ 
	
	\begin{equation*}
	\tilde{\Lambda}^s(\bar{p}):=\{\x \in\tilde{\RR}\big{|} \exists n \geq 0 ,   W_{loc}^{s}(\tilde{f}^n(\x)) \pitchfork W^{u}(\mathcal{O}(\bar{p}))\neq \emptyset \}
	\end{equation*}
	and 
	\begin{equation*}
	\tilde{\Lambda}^u(\bar{p}):=\{ \x \in \tilde{\RR}\big{|} \exists \tilde{y} \in \tilde{\mathcal{R}}, \pi(\tilde{y}) = \pi(\x), \exists n \geq 0, \, \,  f^n(W^{u}(\y)) \pitchfork W_{loc}^{s}(\mathcal{O}(\bar{p}))\neq \emptyset\}.
	\end{equation*}
	Here $\tilde{\mathcal{R}}$ denotes the regular NUH-points in $M^f$ due to Remark \ref{regular NUH set}. 
	Observe that $\pi^{-1}(\pi (\tilde{\Lambda}^{*}(\bar{p}))) = \tilde{\Lambda}^{*}(\bar{p}) $ for $* \in \{s, u\}.$
	Once necessary we can define $\Lambda^s(p) := \pi(\tilde{\Lambda}^s(\bar{p}))$, $\Lambda^u(p) := \pi(\tilde{\Lambda}^u(\bar{p})) $ and $ \Lambda(p):= \pi(\tilde{\Lambda}(\bar{p}))$.
	\end{definition}
	
	Take $\x \in \tilde{\Delta}_l$ a recurrent point in the support of $\tilde{\mu}$ restricted to the Pesin block $\tilde{\Delta}_l$. Using Closing Lemma in \cite{MT}, we find a hyperbolic periodic point $\bar{p}$ therefore we have the following two crucial lemmas about the ergodic homoclinic class of $\bar{p}$.

	\begin{lemma}[\cite{MT}] \label{4-5}
		Let $p$ be a periodic point obtained as above, then $\tilde{\mu}(\tilde{\Lambda}(\bar p))>0.$	
	\end{lemma}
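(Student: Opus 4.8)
The plan is to exploit the fact that $\x$ lies in the support of $\tilde{\mu}$ restricted to the Pesin block $\tilde{\Delta}_l$, together with the uniform hyperbolic geometry available inside such a block. First I would record the output of the Closing Lemma: the periodic point $\p$ is uniformly hyperbolic, and because its orbit shadows the recurrent orbit of $\x$, the global unstable manifold $W^{u}(\mathcal{O}(\p))$ and the local stable manifold $W^{s}_{loc}(\mathcal{O}(\p))$ have a definite size and are $C^{1}$-close to the local unstable and stable manifolds of $\x$. Inside $\tilde{\Delta}_l$ the subbundles $E^{s}$ and $E^{u}$ vary continuously and their angle is bounded below by $1/l$, so by Remark \ref{s-u NUH} and the Pesin-block estimates the local invariant manifolds $W^{s}_{loc}(\y)$ and $W^{u}_{loc}(\y)$ vary continuously with $\y$ and carry a size that is bounded below uniformly on $\tilde{\Delta}_l$.

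Next I would fix a small neighborhood $\tilde{U}$ of $\x$ in $M^{f}$. Since $\x$ is recurrent and lies in the support of $\tilde{\mu}|_{\tilde{\Delta}_l}$, we have $\tilde{\mu}(\tilde{U}\cap\tilde{\Delta}_l)>0$. The claim to prove is that, for $\tilde{U}$ small enough, every $\y\in\tilde{U}\cap\tilde{\Delta}_l$ lies in $\tilde{\Lambda}^{s}(\p)\cap\tilde{\Lambda}^{u}(\p)$. For the stable part: by the continuity above, $W^{s}_{loc}(\y)$ is uniformly large and $C^{1}$-close to $W^{s}_{loc}(\x)$; since $W^{u}(\mathcal{O}(\p))$ is transverse to $W^{s}_{loc}(\x)$ (transversality of $E^{s}$ and $E^{u}$ in the block) and $C^{1}$-close to it, it must meet $W^{s}_{loc}(\y)$ transversally, so $\y\in\tilde{\Lambda}^{s}(\p)$ with $n=0$. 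For the unstable part I use the endomorphism bookkeeping built into Definition \ref{ergodic homoclinic classes}: I pick a branch $\tilde{z}\in\pi^{-1}(\pi(\y))\cap\tilde{\RR}$ whose unstable manifold, iterated forward, meets $W^{s}_{loc}(\mathcal{O}(\p))$ transversally by the symmetric continuity-and-shadowing argument, which places $\y\in\tilde{\Lambda}^{u}(\p)$.

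Combining the two inclusions gives $\tilde{U}\cap\tilde{\Delta}_l \subset \tilde{\Lambda}^{s}(\p)\cap\tilde{\Lambda}^{u}(\p)=\tilde{\Lambda}(\p)$, and therefore $\tilde{\mu}(\tilde{\Lambda}(\p))\geq\tilde{\mu}(\tilde{U}\cap\tilde{\Delta}_l)>0$, which is the assertion.

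The main obstacle, I expect, is the unstable direction in the endomorphism setting: unstable manifolds are defined only along full backward orbits in $M^{f}$, so to witness $\y\in\tilde{\Lambda}^{u}(\p)$ one cannot read the required manifold off $\y$ directly but must produce the correct preimage branch $\tilde{z}$ and control its local unstable manifold and its persistence under forward iteration. This is precisely where the inverse-limit formulation (with the auxiliary $\tilde{y}\in\tilde{\RR}$, $\pi(\tilde{y})=\pi(\x)$) and the adapted stable/unstable manifold theory of Remark \ref{s-u NUH} are needed, to guarantee that the relevant manifolds survive, remain transverse, and keep a definite size as $\y$ ranges over the block.
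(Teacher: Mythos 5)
The paper itself contains no proof of this lemma: it is imported verbatim from \cite{MT}, with only the setup (a recurrent point $\x$ in the support of $\tilde{\mu}$ restricted to a Pesin block $\tilde{\Delta}_l$, plus the Katok closing lemma) stated beforehand, so there is no in-paper argument to compare against. Your proposal correctly reconstructs the standard argument that \cite{MT} (following \cite{13}) uses: uniform size, continuous variation and uniformly transverse angles of local stable/unstable manifolds on the Pesin block force an entire set $\tilde{U}\cap\tilde{\Delta}_l$ into $\tilde{\Lambda}^{s}(\p)\cap\tilde{\Lambda}^{u}(\p)$, and the support condition makes that set of positive $\tilde{\mu}$-measure. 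One small simplification: for the unstable half you need not hunt for an auxiliary branch, since $\y\in\tilde{\Delta}_l$ is already an inverse-limit point carrying its own backward orbit, so you may take $\tilde{z}=\y$ itself and use $W^{u}(\y)$ directly (with $n=0$) in the definition of $\tilde{\Lambda}^{u}(\p)$.
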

	
	\begin{lemma}[\cite{MT}]\label{4-6}
		$\tilde{\Lambda}(\bar p)$ is $\tilde{f}-$invariant.
	\end{lemma}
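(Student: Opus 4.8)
The plan is to prove invariance factor by factor, writing $\tilde\Lambda(\bar{p})=\tilde\Lambda^{s}(\bar{p})\cap\tilde\Lambda^{u}(\bar{p})$ and showing that each factor is carried to itself by the shift homeomorphism $\tilde f$; since $\tilde f$ is a bijection it suffices to establish $\tilde f(\tilde\Lambda^{*})\subseteq\tilde\Lambda^{*}$ together with $\tilde f^{-1}(\tilde\Lambda^{*})\subseteq\tilde\Lambda^{*}$ for $*\in\{s,u\}$. The three geometric facts I would use throughout are: (i) forward contraction of local stable leaves, $f(W_{loc}^{s}(\tilde y))\subseteq W_{loc}^{s}(\tilde f(\tilde y))$; (ii) equivariance of the global unstable set, $f(W^{u}(\tilde y))=W^{u}(\tilde f(\tilde y))$, which holds because the unstable manifold of $\tilde y\in M^{f}$ depends only on the backward part of $\tilde y$ and $\tilde f$ simply shifts that backward part; and (iii) the fact that $f$ is a local diffeomorphism, so that $df$ is a linear isomorphism on every tangent space and hence preserves transversal intersections. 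I would also use that $W^{u}(\mathcal{O}(\bar{p}))$ is forward invariant and $W_{loc}^{s}(\mathcal{O}(\bar{p}))$ forward contracted, both because $\mathcal{O}(\bar{p})$ is a periodic orbit. The existence of the local leaves themselves along every orbit of the relevant total orbit is guaranteed by Remark \ref{s-u NUH}.

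For $\tilde\Lambda^{s}(\bar{p})$ both inclusions are immediate, because its defining condition is phrased through the forward shifts $\tilde f^{n}(\tilde x)$, which $\tilde f$ handles tautologically. If $\tilde f(\tilde x)\in\tilde\Lambda^{s}$ with witness exponent $n$, then $W_{loc}^{s}(\tilde f^{n+1}(\tilde x))$ meets $W^{u}(\mathcal{O}(\bar{p}))$ transversally, so $\tilde x\in\tilde\Lambda^{s}$ with witness $n+1$. Conversely, if $\tilde x\in\tilde\Lambda^{s}$ with witness $n\geq 1$ we use $n-1$ for $\tilde f(\tilde x)$, while for the boundary case $n=0$ I would push the transversal intersection point forward by $f$ and invoke (i) and (iii) to obtain a transversal intersection of $W_{loc}^{s}(\tilde f(\tilde x))$ with $W^{u}(\mathcal{O}(\bar{p}))$. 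The same bookkeeping, run through (ii) instead of (i), gives the forward inclusion $\tilde f(\tilde\Lambda^{u}(\bar{p}))\subseteq\tilde\Lambda^{u}(\bar{p})$: a witness $\tilde y$ for $\tilde x$ (with $\pi(\tilde y)=\pi(\tilde x)$) is moved to the witness $\tilde f(\tilde y)$ for $\tilde f(\tilde x)$, since $\pi(\tilde f(\tilde y))=\pi(\tilde f(\tilde x))$, the exponent drops by one, and in the boundary case the intersection point is advanced by $f$.

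The one genuinely delicate point, and the place where the non-invertibility of $f$ bites, is the reverse inclusion $\tilde f^{-1}(\tilde\Lambda^{u}(\bar{p}))\subseteq\tilde\Lambda^{u}(\bar{p})$. Suppose $\tilde f(\tilde x)\in\tilde\Lambda^{u}$ with witness $\tilde z$, so $\pi(\tilde z)=\pi(\tilde f(\tilde x))=x_{1}$ and $f^{n}(W^{u}(\tilde z))\pitchfork W_{loc}^{s}(\mathcal{O}(\bar{p}))\neq\emptyset$. Pulling back by (ii) only shows $\tilde f^{-1}(\tilde z)\in\tilde\Lambda^{u}$, and its base point $\pi(\tilde f^{-1}(\tilde z))=z_{-1}$ is merely \emph{some} $f$-preimage of $x_{1}$, which need not coincide with $x_{0}=\pi(\tilde x)$. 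Thus the witness produced by the shift lives over the wrong branch of $f^{-1}(x_{1})$, and there is no a priori reason that an unstable set built from a backward orbit through $x_{0}$ should inherit the transversality enjoyed by the backward orbit through $z_{-1}$. I expect this branch-matching to be the main obstacle.

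To attack it I would use the saturation identity $\pi^{-1}(\pi(\tilde\Lambda^{u}(\bar{p})))=\tilde\Lambda^{u}(\bar{p})$ recorded after Definition \ref{ergodic homoclinic classes}, which reduces the whole question to the $f$-invariance of the projected set $\Lambda^{u}(p)\subseteq M$, namely to proving $f^{-1}(\Lambda^{u}(p))\subseteq\Lambda^{u}(p)$; here I would lean on the observation stressed in the introduction that the $f$-preimages of a hyperbolic point are ``copies'' sharing the same forward data, together with the continuous dependence and transversality of the local leaves on a Pesin block $\tilde\Delta_{l}$ (the listed properties of $\tilde\Delta^{l}$). Should this resist a direct geometric proof, the fallback, which is sufficient for every downstream use of the lemma, is to settle for invariance modulo $\tilde\mu$: the forward inclusion already proved, combined with the $\tilde f$-invariance of $\tilde\mu$, forces $\tilde\mu(\tilde f^{-1}(\tilde\Lambda)\setminus\tilde\Lambda)=0$ and hence $\tilde f^{-1}(\tilde\Lambda)=\tilde\Lambda$ up to a null set, which is all that the ergodicity arguments built on Lemma \ref{4-5} and the main theorems actually require.
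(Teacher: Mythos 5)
First, a point of comparison: the paper itself contains no proof of this lemma --- it is quoted from \cite{MT} --- so there is no internal argument to measure you against; your proposal has to stand on its own. Judged that way, most of it is sound: the treatment of $\tilde\Lambda^{s}(\bar{p})$ in both directions, the forward inclusion $\tilde f(\tilde\Lambda^{u}(\bar{p}))\subseteq\tilde\Lambda^{u}(\bar{p})$, and the boundary cases $n=0$ (pushing the transversal intersection point forward and using that $f$ is a local diffeomorphism, that $f(W^{s}_{loc}(\cdot))\subseteq W^{s}_{loc}(\tilde f(\cdot))$, and that $W^{u}(\mathcal{O}(\bar{p}))$, $W^{s}_{loc}(\mathcal{O}(\bar{p}))$ are invariant) are all correct. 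But the step you yourself flag as the crux --- the backward inclusion $\tilde f^{-1}(\tilde\Lambda^{u}(\bar{p}))\subseteq\tilde\Lambda^{u}(\bar{p})$ --- is never actually proved, and neither of your proposed remedies closes it. The saturation identity $\pi^{-1}(\pi(\tilde\Lambda^{u}(\bar{p})))=\tilde\Lambda^{u}(\bar{p})$ only translates the claim into $f^{-1}(\Lambda^{u}(p))\subseteq\Lambda^{u}(p)$ on $M$, which is word for word the same branch-matching problem: given a witness $\tilde z$ over $f(x_{0})$ whose past runs through $z_{-1}\neq x_{0}$, you must manufacture a witness whose past runs through $x_{0}$, and nothing has been gained by projecting. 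The appeal to ``preimages are copies sharing the same forward data'' cannot help either: shared forward data governs stable objects, whereas the unstable manifold of a lift is determined precisely by its backward branch --- which is exactly what differs between $x_{0}$ and $z_{-1}$. Nor does continuity of local leaves on a Pesin block $\tilde\Delta_{l}$ bridge two distinct lifts of the same point: they give two genuinely different unstable leaves through $f(x_{0})$, and no continuity statement forces the forward images of the second leaf to inherit a transversal intersection with $W^{s}_{loc}(\mathcal{O}(\bar{p}))$ from the first.

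Your fallback --- that forward invariance plus the $\tilde f$-invariance of $\tilde\mu$ gives $\tilde\mu\bigl(\tilde f^{-1}(\tilde\Lambda(\bar{p}))\setminus\tilde\Lambda(\bar{p})\bigr)=0$, hence invariance mod $\tilde\mu$ --- is correct as a statement, but it is strictly weaker than the lemma and, contrary to what you assert, it is \emph{not} sufficient for every downstream use in this paper. In the proof of Lemma \ref{total transitivity in EHC} the invariance is applied pointwise: from $\tilde f^{m}(\tilde{x})\in\tilde\Lambda(\bar{p})$ for a single $\tilde{x}\in\pi^{-1}(x)$, with $m$ possibly positive, the paper concludes $\tilde{x}\in\tilde\Lambda(\bar{p})$. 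The sets $\pi^{-1}(x)$ and $O_{T}^{f}(x)$ are $\tilde\mu$-null, so invariance modulo a null set says nothing about those particular points. So what your write-up establishes is full invariance of $\tilde\Lambda^{s}(\bar{p})$, forward invariance of $\tilde\Lambda^{u}(\bar{p})$, and mod-$\tilde\mu$ invariance of $\tilde\Lambda(\bar{p})$; the set-theoretic backward invariance of $\tilde\Lambda^{u}(\bar{p})$, which is the part of the lemma that genuinely uses the endomorphism structure and that the paper's later arguments rely on, remains an open gap in the proposal.
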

	
The following theorem is the main result of \cite{MT}.
	
	\begin{theorem} \label{pre-main}
		Let $f: M \rightarrow M$ be a $C^2-$endomorphism of a closed $n-$dimensional manifold. Then for any $ 0 < k < n$
		$$ \sharp \{ \text{Ergodic hyperbolic SRB measures of index k} \} \leq  \mathcal{E}_k.$$ 
	\end{theorem}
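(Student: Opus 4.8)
The plan is to attach to every ergodic hyperbolic SRB measure of index $k$ a hyperbolic periodic point together with its ergodic homoclinic class, to show that this class carries full measure, and then to prove that two such measures whose periodic points are homoclinically related must in fact coincide. Since the homoclinically independent classes of a fixed stable index form a finite family bounded by the quantity $\mathcal{E}_k$ introduced in \cite{MT}, the injectivity of the assignment $\mu\mapsto[\bar p]$ will yield the stated inequality.

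First I would pass to the inverse limit. Given an ergodic hyperbolic SRB measure $\mu$ of index $k$, Proposition \ref{1-1} furnishes a unique ergodic lift $\tilde\mu$ on $M^{f}$, where $\tilde f$ is invertible and the full Pesin apparatus of \S\ref{preliminaries} is available. As $\mu$ is hyperbolic, $\tilde\mu(\tilde{\RR})=1$ and some Pesin block $\tilde\Delta_l$ has positive measure; selecting a recurrent point $\x$ in the support of $\tilde\mu|_{\tilde\Delta_l}$ and invoking the Closing Lemma of \cite{MT} produces a hyperbolic periodic point $\bar p$ of stable index $k$. By Lemma \ref{4-5} its ergodic homoclinic class satisfies $\tilde\mu(\tilde\Lambda(\bar p))>0$, and by Lemma \ref{4-6} this class is $\tilde f$-invariant, so ergodicity of $\tilde\mu$ forces $\tilde\mu(\tilde\Lambda(\bar p))=1$. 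This is the mechanism pinning each SRB measure to a single homoclinic class.

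The crucial step is a rigidity statement: if two ergodic hyperbolic SRB measures $\mu_1,\mu_2$ of index $k$ generate periodic points $\bar p_1,\bar p_2$ that are homoclinically related---their stable and unstable sets meeting transversally along the total orbits---then $\mu_1=\mu_2$. A homoclinic relation makes the two classes $\tilde\Lambda(\bar p_1)$ and $\tilde\Lambda(\bar p_2)$ coincide up to $\tilde\mu_i$-null sets, and I would then invoke uniqueness of an SRB measure assigning full mass to a given class: the SRB property gives absolute continuity of the conditionals along unstable plaques, absolute continuity of the stable holonomy transports positive-Lebesgue sets of generic points between the two classes, and the overlap of the resulting basins together with ergodicity forces $\mu_1=\mu_2$. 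The same scheme shows that all periodic points generated from a single $\mu$ are mutually homoclinically related, so $[\bar p]$ is well defined and distinct measures must lie in distinct classes. Counting the homoclinically independent index-$k$ classes, whose number is $\mathcal{E}_k$ by the construction in \cite{MT}, then finishes the argument.

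The main obstacle is precisely this rigidity step in the non-invertible setting. One must run the absolute-continuity and Hopf argument on $M^{f}$ using the adapted stable-unstable manifolds guaranteed by Remark \ref{s-u NUH}, while controlling the pre-image branches of the endomorphism that have no counterpart for diffeomorphisms and that prevent the bound from collapsing to the value $1$ obtained for transitive surface diffeomorphisms in \cite{13}. A secondary technical point is to ensure that the transversal intersections defining the homoclinic relation persist under the continuous variation of $E^{s}$ and $E^{u}$ on the closed Pesin blocks $\overline{\tilde\Delta^l}$, as recorded in Remark \ref{important}.
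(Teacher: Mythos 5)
This theorem is not proved in the paper at all --- it is quoted as the main result of \cite{MT}, and Section 3 of the paper merely restates the ingredients it depends on (the Closing Lemma, Lemmas \ref{4-5} and \ref{4-6}, the ergodic homoclinic classes, and the skeleton quantity $\mathcal{E}_k$). Your reconstruction assembles exactly those ingredients in the way \cite{MT} does --- closing lemma to attach a hyperbolic periodic point of index $k$ to each measure, positivity plus invariance plus ergodicity to give $\tilde{\Lambda}(\bar{p})$ full $\tilde{\mu}$-measure, absolute continuity of holonomies and a Hopf-type argument for the rigidity/injectivity step, and the skeleton count for the final bound --- so it is essentially the same approach as the source on which the paper relies.
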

	
	To give a concrete bound for the number of SRB measures, it was defined the {\it k-skeleton} inside the set of hyperbolic periodic points of a fixed stable index. Let us bring a more precise version of this definition which will be used in the proof of the Theorem \ref{2} (find the first diffeomorphism and endomorphism versions correspondingly in \cite{VY}, \cite{MT}).
	
	\begin{definition}[\textbf{k-Eskeleton}] \label{def:skeleton}
		A $k-$skeleton $( 0 < k < n=\dim(M))$ of $f$ is a subset $P=\{p_j\}_{j\in\mathcal{I}}$ of hyperbolic periodic points $\{p_i\}_{i \in \mathcal{I}}$ of stable index $k$ such that:
		\begin{itemize}
			\item For any other $x\in M$ in special any  hyperbolic periodic point $p \in M \setminus P$, there is a \textit{"unique"} $j \in \mathcal{I}$ such that either $[p, p_j] \neq \emptyset$ or $[p_j, p] \neq \emptyset.$
			\item For every $i \neq j, [p_i, p_j] = \emptyset.$
			\end{itemize}
			\end{definition}
			
Denoting by $\mathcal{E}_k,$ the maximal cardinality of $k-$skeletons inside $Per_k(f)$ (hyperbolic periodic points of stable index $k$). Also for any two hyperbolic periodic points $p$ and $q$ we say that they are in a homoclinic relation or $[p, q] \neq \emptyset$ iff $ W^u(\bar{p}) \pitchfork W_{loc}^s(\mathcal{O}(q)) \neq \emptyset.$  If $z \in W^u(\bar{p}) \pitchfork W_{loc}^s(\mathcal{O}(q))$ then $T_z W^u(\bar{p}) \oplus T_z(W_{loc}^s(\mathcal{O}(q))) = T_z(M).$

\begin{lemma}
	Let $q$ be any hyperbolic periodic point of $M$ which does not belong to a $k-$eskeleton. Then $q$ becomes in a homoclinic relation with just one element of a $k-$eskeleton.
\end{lemma}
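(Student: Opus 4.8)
The plan is to read this statement as having three ingredients hidden inside it — existence of a homoclinic relation, its symmetry, and its uniqueness — and to prove each by combining the two defining clauses of a $k$-skeleton (Definition \ref{def:skeleton}) with an inclination-lemma argument transported to the inverse limit space. Throughout I take $q$ to be a hyperbolic periodic point of stable index $k$ (the relevant case, since $q$ is being compared to the index-$k$ skeleton), so that $\dim W^u(\bar q)=n-k$ and $\dim W^s_{loc}(\mathcal{O}(q))=k$ agree with the dimensions for the $p_j$.

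First I would dispose of existence. Since $q\notin P=\{p_j\}_{j\in\mathcal{I}}$ is a hyperbolic periodic point, the first clause of Definition \ref{def:skeleton}, applied with $q$ playing the role of the arbitrary hyperbolic periodic point, hands us a \emph{unique} index $j\in\mathcal{I}$ for which at least one of $[q,p_j]\neq\emptyset$ or $[p_j,q]\neq\emptyset$ holds. This already gives a one-sided homoclinic relation between $q$ and a skeleton element, so only two tasks remain: upgrade this directional relation to a genuine symmetric homoclinic relation, and exclude a second skeleton partner.

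The heart of the matter is symmetry. Because $q$ and $p_j$ share the stable index $k$, any intersection $W^u(\bar p_j)\pitchfork W^s_{loc}(\mathcal{O}(q))$ is transversal and zero-dimensional in $M$. Choosing $z$ in such an intersection, its forward orbit converges to $\mathcal{O}(q)$, and the inclination lemma at the hyperbolic periodic orbit $\mathcal{O}(q)$ forces the forward iterates of a small plaque of $W^u(\bar p_j)$ through $z$ to accumulate, in the $C^1$ topology, on the local unstable manifold of $\bar q$. Since the original plaque meets $W^s_{loc}(\mathcal{O}(p_j))$ transversally and transversality is $C^1$-open, these accumulating copies of $W^u(\bar q)$ must also meet $W^s_{loc}(\mathcal{O}(p_j))$ transversally, giving $[q,p_j]\neq\emptyset$; interchanging the roles of $q$ and $p_j$ yields the reverse implication. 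The delicate point here, and what I expect to be the main obstacle, is that for endomorphisms the unstable manifolds $W^u(\bar p_j)$ and $W^u(\bar q)$ are attached to the chosen periodic pre-orbits $\bar p_j,\bar q\in M^{f}$ and are only well defined along orbits in the inverse limit. I would therefore run the inclination lemma on $M^{f}$, using the local invariant manifolds guaranteed by Remark \ref{s-u NUH}, and push the intersection down through $\pi$; checking that the $C^1$-accumulation and the persistence of transversality genuinely survive this lift is the technical core.

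Finally, uniqueness follows by transitivity. Suppose for contradiction that $q$ were homoclinically related to two distinct skeleton elements $p_i$ and $p_j$ with $i\neq j$. Chaining the now-symmetric relations through $q$ and applying the inclination lemma once more — if $W^u(\bar p_i)\pitchfork W^s_{loc}(\mathcal{O}(q))\neq\emptyset$ and $W^u(\bar q)\pitchfork W^s_{loc}(\mathcal{O}(p_j))\neq\emptyset$, then $W^u(\bar p_i)\pitchfork W^s_{loc}(\mathcal{O}(p_j))\neq\emptyset$ — would give $[p_i,p_j]\neq\emptyset$, contradicting the second clause of Definition \ref{def:skeleton}, which demands $[p_i,p_j]=\emptyset$ for $i\neq j$. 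Hence the skeleton element homoclinically related to $q$ is unique, which is the assertion of the lemma.
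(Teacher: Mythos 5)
Your argument has a genuine gap at its central step, the ``symmetry'' upgrade. The relation $[p,q]\neq\emptyset$ defined in the paper is one-sided ($W^u(\bar{p})\pitchfork W^s_{loc}(\mathcal{O}(q))\neq\emptyset$), and it is simply not symmetric: one can have $W^u(\bar{p}_j)$ falling transversally into $W^s_{loc}(\mathcal{O}(q))$ while $W^u(\bar{q})$ wanders off elsewhere (say, into the basin of an attractor) and never returns near $\mathcal{O}(p_j)$, so $[p_j,q]\neq\emptyset$ yet $[q,p_j]=\emptyset$. The inclination lemma cannot rule this out, and your deduction of it is incoherent at two points: first, the plaque $D\subset W^u(\bar{p}_j)$ through the heteroclinic point $z$ has no reason to meet $W^s_{loc}(\mathcal{O}(p_j))$ at all (it passes through $z$, not through $p_j$); second, even granting intersections of the iterates $f^n(D)$ with some manifold, ``transversality is $C^1$-open'' transfers transversal intersections from a limit manifold to its approximants, not from approximants to the limit --- intersection points of a $C^1$-convergent sequence of discs can escape through the boundary, so nothing is inherited by $W^u_{loc}(\bar{q})$. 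What the inclination lemma genuinely gives is only \emph{transitivity} of the one-sided relation ($[p_i,q]\neq\emptyset$ and $[q,p_j]\neq\emptyset$ imply $[p_i,p_j]\neq\emptyset$), which is the part of your proof that is correct. But since your uniqueness step must chain $[p_i,q]$ with $[q,p_j]$, it silently requires symmetry exactly in the problematic configuration where both given relations point the same way, $[p_i,q]\neq\emptyset$ and $[p_j,q]\neq\emptyset$; there your chaining cannot even begin, and the proof collapses.

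This same-direction configuration is precisely the one the paper treats, and it does so with a measure-theoretic ingredient that your purely topological argument is missing. The paper supposes $[p_1,q]\neq\emptyset$ and $[p_2,q]\neq\emptyset$, notes (by the main theorem of \cite{MT}) that $p_1$ and $p_2$ carry distinct ergodic SRB measures $\mu_{p_1},\mu_{p_2}$ with disjoint basins, uses the $\lambda$-lemma to bring unstable discs $D_1^u\subset W^u(\bar{p}_1)$ and $D_2^u\subset W^u(\bar{p}_2)$ $C^1$-close to $W^u(\bar{q})$, and then invokes the ergodic (Hopf) criterion together with absolute continuity of the stable holonomies to conclude that the basins of $\mu_{p_1}$ and $\mu_{p_2}$ would intersect in a set of positive Lebesgue measure --- contradicting disjointness. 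In short: where you try to produce a forbidden transversal connection $[p_i,p_j]\neq\emptyset$, the paper produces a forbidden overlap of SRB basins; the latter works without ever needing to reverse a one-sided homoclinic relation, and that is why the SRB hypothesis (absent from your argument) is essential to the lemma.
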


\begin{proof}
	The above lemma is a corollary of the Main Theorem of \cite{MT} and $\lambda-$ lemma. Let $K=\{p_1,...p_n,...\}$ any $k-$eskeleton. Suppose by contradiction that there exists more than one element of $K$ such that $q$ becomes in a homoclinic relation with them. Without loss of generality let us assume that there exists $p_1,p_2\in K$ such that $[p_1,q] \neq \emptyset$ and $[p_2, q] \neq \emptyset$. By definition this means that $ W^u(\bar{p_1}) \pitchfork W_{loc}^s(\mathcal{O}(q)) \neq \emptyset$ and $ W^u(\bar{p_2}) \pitchfork W_{loc}^s(\mathcal{O}(q)) \neq \emptyset$. Due to main results of \cite{MT}, $O(p_1),O(p_2)$ are the support of some disjoint ergodic SRB measures which we denote them by $\mu_{p_1},\mu_{p_2}$. Choosing adequate Pesin blocks and applying $\lambda-$ lemma to some unstable discs $D^u_1, D^u_2$ as pieces of $W^u(\bar{p_1})$ and $W^u(\bar{p_2})$ respectively, that intersect transversally with $W_{loc}^s(\mathcal{O}(q))$, we find some large positive $m,n$ such that $f^{m}(D^u_{1})$ and $W^u(D^u_{2})$ becomes near enough to $W^{u}(\bar q)$. Applying ergodic criteria technique, used in the proof of the main theorem of \cite{MT}, one can show that the basin of measures $\mu_{p_1}$ and $\mu_{p_2}$, intersect along the stable holonomies and so becomes non-empty. This is in a contradiction with the fact that the elements of an eskeleton carry disjoint SRB measures. In other words it says that $q$ should become in a homoclinic relation with just one element of the $k-$ eskeleton. 
\end{proof}

\section{Proof of the Main Results}\label{proof}

In this section we divide the proves into some lemmas to give a better understanding of them.

The main implements for the proof of Theorem 1 are the transitivity and hyperbolicity, which is not of uniform type obviously. As mentioned in the preliminary section, recall that $\tilde{\RR}$ is assumed to be the Pesin regular full $\tilde{\mu}$-measure subset of non-uniformly hyperbolic points achieved due to the assumption of the existence of an ergodic hyperbolic measure (In case of $\tilde{\mu}-$ positive, it is possible to normalize the measure to a full probability one).

In first step by following lemma let us show that a weak hyperbolic transitivity of $f$ on $M$ is equivalent to a weak hyperbolic transitivity on $M^{f}$.

\begin{lemma}\label{transitivity}
Let $(f,\mu)$ be a hyperbolic measure dynamics with weak hyperbolic transitivity of some point $x\in M$. Then $O_{T}^{f}(x)$ is dense on $M^{f}$.		
\end{lemma}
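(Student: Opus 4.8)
The plan is to transfer the density of the total orbit from $M$ to $M^{f}$ using only the transitivity (density) component of the hypothesis; the non-uniform hyperbolicity plays no role in this particular step, which is purely topological. The essential observation is that the inverse-limit metric $\tilde{d}$ turns the basic open sets of $M^{f}$ into finite coordinate windows: given $\y=(y_{n})\in M^{f}$ and $\epsilon>0$, since $\mathrm{diam}(M)<\infty$ one may choose $N=N(\epsilon)$ so large that the tail $\sum_{|n|>N}2^{-|n|}\,\mathrm{diam}(M)<\epsilon/2$, and then any $(z_{n})$ with $d(z_{n},y_{n})$ small for $|n|\le N$ lies in the ball $B(\y,\epsilon)$. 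Thus approximating $\y$ in $M^{f}$ amounts to reproducing the finite orbit segment $y_{-N},\dots,y_{N}$ up to a prescribed tolerance.

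First I would fix a lift $\x=(x_{n})\in\pi^{-1}(x)$ and recall, via the Dichotomy of Transitivity (Lemma \ref{LPV}) together with the reduction of pre-transitivity to transitivity noted before Theorem \ref{main1}, that we may assume the forward orbit $\{f^{n}(x)\}_{n\ge 0}$ is dense in $M$; the case in which instead a backward orbit $\{x_{-n}\}_{n\ge 0}$ of some lift is dense is handled symmetrically, replacing $\tilde{f}$ by the homeomorphism $\tilde{f}^{-1}$. The key point is that the target window $y_{-N},\dots,y_{N}$ is itself a genuine $f$-orbit segment, so it is determined by its first entry: since $M$ is compact and $f$ is $C^{2}$, each iterate $f^{j}$ $(0\le j\le 2N)$ is uniformly continuous, so there is $\delta'>0$ such that $d(w,y_{-N})<\delta'$ forces $d(f^{j}(w),y_{-N+j})<\delta$ for all $j\le 2N$, where $\delta$ is chosen small enough to guarantee membership in $B(\y,\epsilon)$.

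Next I would invoke density of the forward orbit to produce an index $m\ge 0$ with $d(x_{m},y_{-N})<\delta'$. Propagating by $f$ yields $d(x_{m+j},y_{-N+j})<\delta$ for $0\le j\le 2N$; equivalently, setting $k:=m+N$, the shifted point $\tilde{f}^{k}(\x)=(x_{n+k})_{n}$ matches $\y$ on every coordinate $|n|\le N$ within $\delta$, so $\tilde{f}^{k}(\x)\in B(\y,\epsilon)$. Since $\y$ and $\epsilon$ were arbitrary, the forward $\tilde{f}$-orbit of $\x$ is dense in $M^{f}$, and because $\tilde{f}$ is a homeomorphism this orbit lies inside $O_{T}^{f}(x)$, which is therefore dense as well.

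I expect the only delicate point to be the bookkeeping in the inverse-limit topology: matching an entire bi-infinite window by controlling a single coordinate and letting $f$ transport the approximation forward, together with the correct choice of the shift index $k$ so that both the past and the future coordinates of $\y$ are reproduced. The argument uses nothing beyond uniform continuity of finitely many iterates and the equivariance $\pi\circ\tilde{f}=f\circ\pi$; in particular, once a forward-dense orbit is available, the same proof gives density of the $\tilde{f}$-orbit of \emph{every} lift, so the conclusion is insensitive to which preimage of $x$ one selects.
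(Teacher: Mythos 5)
Your proof is correct and takes essentially the same route as the paper's: both reduce the problem to matching a finite coordinate window $y_{-N},\dots,y_{N}$ of $\tilde{y}$ in the metric $\tilde{d}$, use density of the forward orbit of $x$ plus uniform continuity of the finitely many iterates $f^{j}$, $0\le j\le 2N$, to shadow that window starting from a point near $y_{-N}$, and conclude that a suitable shift $\tilde{f}^{k}(\tilde{x})$ of a lift of $x$ lies $\epsilon$-close to $\tilde{y}$ (the paper phrases the deep-past coordinates as ``choosing suitable pre-images of $f^{m}(x)$,'' which your shifted-lift formulation handles more cleanly). The one loose point is your parenthetical claim that a dense backward branch is handled ``symmetrically, replacing $\tilde{f}$ by $\tilde{f}^{-1}$'': for a non-invertible $f$ closeness does not propagate under pre-images (nearby points can have distant pre-image branches), so in that case one should still land the branch point $x_{-m}$ near $y_{-N}$ and push \emph{forward} with $f$, which is the same forward-propagation argument.
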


\begin{proof}

To show the density of $O_T^{f}(x)$ (see the definition of $\Lambda^{f}$ in preliminary part), let $\y$ be any point which belongs to $M^{f}$, then $\pi(\y):=y$ is a point in $M$.

For any small $\epsilon-$ball $B_{\epsilon}(y)\subset M$, by compactness of $M$ and continuity of $f$, being a local diffeomorphism, we always can choose $m>0$ large enough such that for a large $N$ number of iterates $d(f^{m+i}(x), f^{i}(y)<\epsilon, |i|\leq N$ and then by choosing suitable pre-images of $f^{m}(x),$ which are close enough to $(y_{-n})$ (pre-images along the $\y$ branch) we find some $\x\in O_{T}^{f}(x)$ that $\tilde{d}(\x,\y)<\epsilon.$ As $\y\in M^{f}$ was arbitrary and all such $\x$ would belong to the limit set of $O_T{x}$, then we conclude that $O_T^{f}(x)$ is dense on the inverse limit space $M^{f}$ and in special is dense in $\tilde{\RR}$.
  
\end{proof}

What seems important here, is showing that for $O_{T}^{f}(x)$ dense in $\tilde{\RR}$, its elements approximating Pesin blocks, their local stable-unstable manifolds will transverse the local stable-unstable manifold of some element of the Pesin block. As mentioned in Remark \ref{important}, it may happen that $O_{T}^{f}(x)$ or some of its elements belong to the closure of Pesin blocks, which makes the case easy and we have to do nothing with it. But let use the assumption of Lemma \ref{transitivity} and suppose that they do not belong to the closure of any Pesin Blocks, yet it happens that for some point close enough to the Pesin blocks we get the transversality condition.

\begin{lemma}\label{transversality}
Let $(f,\mu)$ be a hyperbolic measure dynamics with weak hyperbolic transitivity of some point $x\in M$. Then there exist some $\y\in O_{T}^{f}(x)$ and some $\r\in\tilde{\Delta}_{l}$ (arbitrary Pesin block with $l>0$), such that happens 
$$W^{s}_{loc}(\y)\pitchfork W^{u}_{loc}(\r)\neq \emptyset,\,\,\,W^{u}_{loc}(\y)\pitchfork W^{s}_{loc}(\r)\neq \emptyset.$$
\end{lemma}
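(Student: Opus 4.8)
The plan is to fix an arbitrary Pesin block $\tilde{\Delta}_{l}$ of positive $\tilde{\mu}$-measure (so that it genuinely carries the hyperbolic structure) and to choose a base point $\r_0\in\tilde{\Delta}_{l}$, say a recurrent point in the support of $\tilde{\mu}$ restricted to $\tilde{\Delta}_{l}$. I would first record the uniform data attached to the block: a lower bound $\d_l>0$ on the sizes of the local manifolds $W^{s}_{loc}(\cdot)$ and $W^{u}_{loc}(\cdot)$, the angle bound $\angle(E^{s},E^{u})\ge 1/l$, and the continuous dependence of $E^{s}(\cdot,0),E^{u}(\cdot,0)$ and of the two local manifolds on the point of $\tilde{\Delta}_{l}$. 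In particular $W^{s}_{loc}(\r_0)$ is a $k$-disk tangent to $E^{s}(\r_0,0)$ and $W^{u}_{loc}(\r_0)$ is an $(n-k)$-disk tangent to $E^{u}(\r_0,0)$, the two crossing transversally at $\r_0$.

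Next I would invoke Lemma \ref{transitivity}: since $O_{T}^{f}(x)$ is dense in $M^{f}$, I can pick $\y\in O_{T}^{f}(x)$ with $\tilde{d}(\y,\r_0)<\epsilon$ for a small $\epsilon\ll\d_l$ to be fixed. By Remark \ref{s-u NUH} the NUH point $\y$ already possesses well-defined local stable and unstable manifolds; the essential point, and the one I expect to be the real obstacle, is to control their \emph{size} and \emph{direction} even though $\y$ need not lie in $\tilde{\Delta}_{l}$ nor in its closure (exactly the situation flagged in Remark \ref{important}). Here I would use that closeness in the metric $\tilde{d}$ forces orbit shadowing: $\tilde{d}(\y,\r_0)<\epsilon$ yields $d(y_n,(r_0)_n)<2^{|n|}\epsilon$, so for all $|n|\le N$ with $2^{N}\epsilon$ small the orbit of $\y$ shadows that of $\r_0$. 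On this window the uniform hyperbolic estimates of the block (its $l,\beta,\theta$ bounds) apply to $\y$ up to a small error, and feeding these into the endomorphism version of the stable manifold theorem of Remark \ref{s-u NUH} should yield that $W^{s}_{loc}(\y)$ and $W^{u}_{loc}(\y)$ have size at least $\d_l/2$, with $E^{s}(\y,0),E^{u}(\y,0)$ within an angle $\eta(\epsilon)\to 0$ of $E^{s}(\r_0,0),E^{u}(\r_0,0)$.

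With these quantitative manifolds in hand the conclusion is a standard transverse-intersection count. Decompose the displacement of $y=\pi(\y)$ from $r_0=\pi(\r_0)$ along the splitting $E^{s}(\r_0,0)\oplus E^{u}(\r_0,0)$; both components are $O(\epsilon)$. Then $W^{u}_{loc}(\y)$, a disk of dimension $n-k$ and size $\ge\d_l/2$ nearly tangent to $E^{u}(\r_0,0)$ through $y$, must cross the $k$-disk $W^{s}_{loc}(\r_0)$ of size $\ge\d_l$ nearly tangent to $E^{s}(\r_0,0)$, because the centres are within $\epsilon\ll\d_l$ of one another and the tangent planes lie within $\eta(\epsilon)$ of the transverse splitting; since $k+(n-k)=n=\dim M$ the intersection is transverse in $M$. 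The symmetric argument gives $W^{s}_{loc}(\y)\pitchfork W^{u}_{loc}(\r_0)\neq\emptyset$, and taking $\r:=\r_0$ finishes the proof.

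The main obstacle is therefore concentrated in the shadowing/inheritance step, namely controlling the geometry of the Pesin manifolds of the approximating orbit point $\y$ uniformly rather than merely their existence. The cleanest rigorous route I see is to show that one may in fact choose $\y\in O_{T}^{f}(x)$ lying in a Pesin block $\tilde{\Delta}_{l'}$ with $l'$ comparable to $l$, using density together with the fact that the blocks $\tilde{\Delta}_{l}$ exhaust the full-measure set $\tilde{\RR}$; on $\tilde{\Delta}_{l'}$ the uniform size and continuous variation of the local manifolds then replace the ad hoc shadowing estimate and make the transverse-intersection step routine.
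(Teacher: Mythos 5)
You have correctly located the crux: since $\y\in O_{T}^{f}(x)$ need not lie in (the closure of) any Pesin block of $\tilde{\mu}$ (Remark \ref{important}), the manifolds $W^{s}_{loc}(\y)$, $W^{u}_{loc}(\y)$ supplied by Remark \ref{s-u NUH} come with no lower bound on their size and no control on their slopes, and without that control the final disk-crossing count is exactly what fails. However, neither of your two resolutions closes this gap. The finite-window shadowing step is not valid: the size of a Pesin local stable (resp.\ unstable) manifold at $\y$ is governed by the hyperbolicity data along the \emph{entire} forward (resp.\ backward) orbit of $\y$, not by a finite window $|n|\le N$. Knowing $d(y_n,(\r_0)_n)<2^{|n|}\epsilon$ for $|n|\le N$ says nothing about the tail of the orbit, where the tempered constants $C(\y)$, $K(\y)$ of the NUH structure on $O_{T}(x)$ may be arbitrarily bad; consequently $W^{s}_{loc}(\y)$ can have diameter far smaller than $\tilde{d}(\y,\r_0)$ no matter how large $N$ is, and then it simply misses $W^{u}_{loc}(\r_0)$. ``Feeding'' window estimates into the stable manifold theorem is precisely the step that has no proof here, and it is the whole content of the lemma.

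Your proposed ``cleaner route'' fails for the reason the paper itself flags in Remark \ref{important}: one cannot in general choose $\y\in O_{T}^{f}(x)$ inside a block $\tilde{\Delta}_{l'}$. The set $O_{T}(x)$ is countable, hence $\mu$-null, and its points need not be Lyapunov-regular for $\mu$ at all; density of $O_{T}^{f}(x)$ in $M^{f}$ does not force it to meet the full-measure set $\tilde{\RR}=\bigcup_{l}\tilde{\Delta}_{l}$, much less a prescribed block (a dense set can be disjoint from a given full-measure set). The paper's actual proof replaces both of your steps by Katok's admissible-manifold technique: it works in a Lyapunov chart $R(\r,h)$ around a point $\r$ in the closure of the block, shows --- by the argument of \cite{8}, adapted to endomorphisms in part III of Main Lemma 1.1 of \cite{MT2} --- that the invariant manifolds of any $\y\in O_{T}^{f}(x)$ falling in $R(\r,h/10)$ are $(s,h/10,\r)$- and $(u,h/10,\r)$-admissible manifolds of the chart (graphs of $C^{1}$ maps with controlled slope over the chart's stable/unstable coordinates), and then invokes Proposition 4.2 of \cite{MT} to conclude that admissible manifolds of complementary type intersect transversally. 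A self-contained proof would have to reprove that admissibility statement; it cannot be replaced by soft shadowing or by a density/measure argument.
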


\begin{proof}
The proof of this lemma is somehow technical and we refer to \cite{MT2} and \cite{8}. We use the same technique which was used by A.Katok in his paper \cite{8} to show that the local stable-unstable manifolds of a periodic point derived from Katok Closing lemma, are the admissible manifolds near the base point which this phenomena happens. More precisely we use the similar technique, one we used in \cite{MT2}, for the proof of part III of the Main Lemma 1.1.

Omitting the basic definitions, we refer to \cite{MT}, \cite{MT2} for definition of Lyapunov charts for endomorphisms also the definition of $(s,h,.)$ or $(u,h,.)$-stable and unstable admissible manifolds respectively. Let $l>0$ given and $\tilde{\Delta}_{l}$ any Pesin block of $\QQ$. Let consider $\r\in \tilde{\Delta}_{l}$ which belongs to the closure of the block. Inside a $\r-$ Lyapunov chart $R(\r,h)$, where $h>0$ is a small value, we may define the $(u,h,\r)$ and $(s,h,\r)$ admissible manifolds. 

By definition, admissible manifolds are graph of some $C^1-$local maps in $\r-$ Lyapunov charts, defined on some small neighborhoods of origin. 


We remind that by Lemma \ref{transitivity}, the $O_{T}^{f}(x)$ is dense in $M^{f}$ and specially in $\QQ$.
Now considering the fact that $M^{f}$ is a compact metric space, for $\r\in \tilde{\Delta}_{l}$, there exist some sequence of elements in $O_{T}^{f}(x)$, which we may denote it by $\{\y^{j}\}_{j\in \NN},$ that converges to $\r.$ We choose $\y\in \{\y^{j}\}_{j\in \NN}$ being the element which is close enough to $\r$ where belongs to the Lyapunov chart $R(\r,h/10).$ Now using this known fact that local stable- unstable manifolds are the graph of some $C^1-$ functions, we derive that the local stable- unstable manifolds of $\y\in R(\r,h/10)$, are $(u, h/10, \r)$ and $(s, h/10, \r)$ admissible manifolds near the point $r\in M.$ Using Proposition 4.2 of \cite{MT} or Proposition 4.8 of \cite{MT2}, we conclude that desired transversalities between local stable- unstable manifolds occur.
\end{proof}

In the next step, let us use the definition of $\widetilde{EHC}$ and the independence property of local stable manifolds in respect to different orbit branches of a point, in order to show that if $x$ is a weak hyperbolic transitive point of $f$, then not only there exist some $\x\in \pi^{-1}(x)\subset O_{T}^{f}(x)$, that belongs to every ergodic homoclinic class $\tilde{\Lambda}(\p)$ of type \ref{4-5}, but also, all $\x\in \pi^{-1}(x)$ belong to all such $\widetilde{EHC}$. Obviously this last helps us establishing the uniqueness of hyperbolic SRB measures, due to  \cite{MT}. 

\begin{lemma}\label{total transitivity in EHC}
Let $x$ be a transitive weak hyperbolic point of $f$, then all $\x\in\pi^{-1}(x)$ belongs to all ergodic homoclinic classes of some hyperbolic periodic points.
\end{lemma}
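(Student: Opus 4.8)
The plan is to reduce the statement to a single homoclinic chaining inside one arbitrary ergodic homoclinic class and then to propagate the conclusion over the entire fibre $\pi^{-1}(x)$ by exploiting the $\pi$-saturation of the classes. Recall from Definition \ref{ergodic homoclinic classes} that $\pi^{-1}(\pi(\tilde{\Lambda}^{*}(\p)))=\tilde{\Lambda}^{*}(\p)$ for $*\in\{s,u\}$, so $\tilde{\Lambda}(\p)=\tilde{\Lambda}^{s}(\p)\cap\tilde{\Lambda}^{u}(\p)$ is itself saturated by $\pi$-fibres. This is where the branch-independence of local stable manifolds enters: since the stable subbundle along a total orbit is determined by the forward dynamics, it does not depend on the chosen backward branch, which is exactly what makes $\tilde{\Lambda}^{s}(\p)$ (and dually $\tilde{\Lambda}^{u}(\p)$) $\pi$-saturated. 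Consequently it suffices to place a \emph{single} $\x_{0}\in\pi^{-1}(x)$ into $\tilde{\Lambda}(\p)$, and then all $\x\in\pi^{-1}(x)$ follow automatically.

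First I would fix an arbitrary hyperbolic periodic point $\p$ of the type furnished by the Closing Lemma, so that by Lemma \ref{4-5} we have $\tilde{\mu}(\tilde{\Lambda}(\p))>0$. Choosing $l$ large enough that $\tilde{\Delta}_{l}$ carries most of the measure, the set $\tilde{\Delta}_{l}\cap\tilde{\Lambda}(\p)$ has positive $\tilde{\mu}$-measure, and I pick a point $\r$ in it. By construction $\r$ is homoclinically related to $\p$, i.e. (up to forward iteration) $W^{u}_{loc}(\r)$ meets $W^{s}_{loc}(\mathcal{O}(\p))$ transversally and $W^{s}_{loc}(\r)$ meets $W^{u}(\mathcal{O}(\p))$ transversally. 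Now invoke Lemma \ref{transitivity}, which gives density of $O_{T}^{f}(x)$ in $M^{f}$, together with Lemma \ref{transversality} applied at this very block and point: there is some $\y\in O_{T}^{f}(x)$ arbitrarily close to $\r$ whose local invariant manifolds cross those of $\r$ transversally,
$$W^{u}_{loc}(\y)\pitchfork W^{s}_{loc}(\r)\neq\emptyset \qquad\text{and}\qquad W^{s}_{loc}(\y)\pitchfork W^{u}_{loc}(\r)\neq\emptyset.$$

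The heart of the argument is then the transitivity of the homoclinic relation. Read together, the two displayed transversalities say precisely that $\y$ and $\r$ are mutually homoclinically related, while $\r$ is homoclinically related to $\p$. By the standard $\lambda$-lemma (inclination lemma) chaining carried out inside suitable Pesin blocks as in \cite{MT} (the machinery behind Proposition 4.2 and Lemmas \ref{4-5}--\ref{4-6}), forward iterates of $W^{u}_{loc}(\y)$ are driven along $W^{u}(\mathcal{O}(\p))$ so as to meet $W^{s}_{loc}(\mathcal{O}(\p))$ transversally, which places $\y\in\tilde{\Lambda}^{u}(\p)$; the symmetric chaining through the stable direction places $\y\in\tilde{\Lambda}^{s}(\p)$. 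Hence $\y\in\tilde{\Lambda}(\p)$. Since $\y\in O_{T}^{f}(x)$ we may write $\y=\tilde{f}^{k}(\x_{0})$ for some $\x_{0}\in\pi^{-1}(x)$ and $k\in\ZZ$, and the $\tilde{f}$-invariance of the class (Lemma \ref{4-6}) yields $\x_{0}=\tilde{f}^{-k}(\y)\in\tilde{\Lambda}(\p)$. Finally, by the $\pi$-saturation recorded in the first paragraph, every $\x\in\pi^{-1}(x)$ lies in $\tilde{\Lambda}(\p)$; as $\p$ was an arbitrary periodic point of the Closing-Lemma type, the conclusion holds for all such ergodic homoclinic classes.

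I expect the main obstacle to be Step three, the $\lambda$-lemma chaining that upgrades the transversal intersections with $\r$ to genuine transversal connections with the invariant manifolds of the periodic orbit $\mathcal{O}(\p)$. In the non-uniformly hyperbolic, endomorphism setting one must control the admissible manifolds and the persistence of the transversal intersections through iteration within the (non-invariant) Pesin blocks, and one must handle the fact that unstable manifolds genuinely depend on the backward branch—this last point being absorbed by the existential clause over $\tilde{y}$ with $\pi(\tilde{y})=\pi(\x)$ in the definition of $\tilde{\Lambda}^{u}(\p)$. These are exactly the technical ingredients imported from \cite{MT} and \cite{MT2}, so the novelty here is only in organizing them around the density of $O_{T}^{f}(x)$ and the fibrewise propagation.
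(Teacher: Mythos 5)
Your proposal is correct and shares the paper's overall two-step skeleton (first place one point of the fibre $\pi^{-1}(x)$ inside the class $\tilde{\Lambda}(\p)$, then propagate over the whole fibre), but at the crucial first step you take a genuinely different --- and in fact tighter --- route. The paper's own proof argues: density of $O_{T}^{f}(x)$ in $\tilde{\QQ}$ (Lemma \ref{transitivity}) plus $\tilde{\mu}(\tilde{\Lambda}(\p))>0$ (Lemma \ref{4-5}), and from these two facts it directly asserts the existence of $m\in\ZZ$ with $\tilde{f}^{m}(\x)\in\tilde{\Lambda}(\p)\cap\tilde{\Delta}_{l}$, finishing with the $\tilde{f}$-invariance of the class (Lemma \ref{4-6}). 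As literally written that is a leap: a countable dense set need not meet a positive-measure set, so density alone cannot put an element of $O_{T}^{f}(x)$ \emph{inside} $\tilde{\Lambda}(\p)$; the paper is implicitly leaning on Lemma 5.1 of \cite{MT} and the transversality mechanism without invoking them. Your argument supplies exactly the missing bridge: pick $\r\in\tilde{\Delta}_{l}\cap\tilde{\Lambda}(\p)$ (positive measure for $l$ large), use Lemma \ref{transversality} to produce $\y\in O_{T}^{f}(x)$ whose local invariant manifolds cross those of $\r$ transversally, and then chain with the $\lambda$-lemma inside Pesin blocks to upgrade the crossings with $\r$ into transversal crossings with $W^{s}(\mathcal{O}(\p))$ and $W^{u}(\mathcal{O}(\p))$, which is what membership in $\tilde{\Lambda}^{s}(\p)\cap\tilde{\Lambda}^{u}(\p)$ actually requires; notably, Lemma \ref{transversality} is proved in the paper but never cited in its own proof of this lemma, and your route is the one that puts it to work. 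Two small caveats: Lemma \ref{transversality} as stated only asserts existence of \emph{some} pair $(\y,\r)$, while you apply it at a prescribed $\r$ --- this is harmless because the paper's proof of that lemma works for an arbitrary $\r$ in the closure of the block, but it should be said; and the $\lambda$-lemma chaining along the (non-periodic) orbit of $\r$ needs $\r$ to be chosen recurrent, which is automatic $\tilde{\mu}$-almost everywhere. For the propagation step the two arguments are equivalent: you quote the $\pi$-saturation $\pi^{-1}(\pi(\tilde{\Lambda}^{*}(\p)))=\tilde{\Lambda}^{*}(\p)$ recorded in Definition \ref{ergodic homoclinic classes}, whereas the paper re-derives it by hand (branch-independence of $W^{s}_{loc}$ for the stable part, the existential clause over branches for the unstable part). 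In short, your proof buys a justified key step at the cost of invoking the $\lambda$-lemma machinery explicitly; the paper's shortcut buys brevity at the cost of the gap noted above.
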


\begin{proof}

Let $\tilde{\Delta}_{l}$ be some suitable Pesin block, where due to Lemma 5.1 of \cite{MT} plays the role of an ergodic homoclinic class of some hyperbolic periodic point, $\tilde{\Lambda}(\p)$ on $M^{f}$. By Lemma \ref{4-5} we know that $\tilde{\Lambda}(\p)$ has a positive $\tilde{\mu}-$ measure. In Lemma \ref{transitivity} we have shown that weak hyperbolic transitivity of some $x\in M$ implies the density of $O_T^{f}(x)$ on $\tilde{\QQ}$. 

To show that there exist some $\x\in O_T^{f}(x)\cap\tilde{\Lambda}(\p)$. Observe that $\tilde{\mu}(\tilde{\Lambda}(\p))>0$ and $O_{T}^{f}(x)$ is dense in $\tilde{\QQ}=\bigcup_{l>1}\overline{\Delta_{l}}$. Therefore there exist some $m\in \ZZ$ that $\y=\tilde{f}^{m}(\x)\in \tilde{\Lambda}(\p)\cap \tilde{\Delta}_{l}$, where $\x\in \pi^{-1}(x)$. Due to Lemma \ref{4-6}, $\tilde{\Lambda}(\p)$ is $\tilde{f}-$ invariant, thereupon we can conclude that there exists some $\x\in \pi^{-1}(x)$ which belongs to $\tilde{\Lambda}(\p)$.

Now let us show that not only there exist some $\x\in \pi^{-1}(x)$ which  belongs to $\tilde{\Lambda}(\p)$, that also all $\x\in \pi^{-1}(x)$ belongs to $\tilde{\Lambda}(\p)$. To see that, use the definition of ergodic homoclinic classes on $M^{f}$. Suppose $\x_i$ is any $\pi-$pre-image of $x$ which belongs to $\pi^{-1}(x)$. We need to show that $\x_i$ belongs to both $\tilde{\Lambda}^s(\bar{p})$ and $\tilde{\Lambda}^u(\bar{p})$. As $\x\in \tilde{\Lambda}(\p)$ then $\x\in\tilde{\Lambda}^s(\bar{p})$, hence there exists some $m>0$ such that 
$$W_{loc}^{s}(\tilde{f}^n(\x)) \pitchfork W^{u}(\mathcal{O}(\bar{p}))\neq \emptyset,$$

Observe that the local stable manifolds does not depend on $\x$, which means $W_{loc}^{s}(\tilde{f}^n(\x))=W_{loc}^{s}(\tilde{f}^n(\x_i))$ for any $\x_i\in \pi^{-1}(x)$, means we have 

$$W_{loc}^{s}(\tilde{f}^n(\x_i)) \pitchfork W^{u}(\mathcal{O}(\bar{p}))\neq \emptyset\Longrightarrow \x_i\in \tilde{\Lambda}^s(\bar{p}).$$

It remains to show that $\x_i$ belongs to $\tilde{\Lambda}^u(\bar{p})$. In fact this part is somehow trivial by definition. We know that $\x \in \tilde{\Lambda}^u(\bar{p})$, so by definition, it exists some $\y\in \pi^{-1}(x)$ that $\exists n \geq 0$ that $f^n(W^{u}(\y)) \pitchfork W_{loc}^{s}(\mathcal{O}(\bar{p}))\neq \emptyset$. By the fact that $\pi(\x)=\pi(\x_i)$ thereupon $\x_i\in\tilde{\Lambda}^u(\bar{p})$. 
\end{proof}

\vspace{0.1cm}
\begin{proof}[\textbf{Proof of Theorem 1}]
According to Theorem 5.10 of \cite{MT} there is a close relationship between hyperbolic measures with SRB property and ergodic homoclinic classes of some periodic points. It says that to any such measure $\mu$ and its inverse limit correspondence $\tilde{\mu},$ we can correspond some hyperbolic periodic point $p=\pi(\p)$ that $\tilde{\mu}|_{\tilde{\Lambda}(\p)}$ is ergodic. To obtain the uniqueness of corresponded SRB measures, it is enough to show that considering a weak hyperbolic transitivity, all such $\tilde{\Lambda}(\p)$ coincides.

Due to above lemmas, $O_{T}^{f}(x)$ is dense in $\tilde{\RR}$. Also for any hyperbolic periodic point $p$ corresponded to a hyperbolic measure with some SRB property, $O_{T}^{f}(x)$ becomes dense in $\tilde{\Lambda}(\p)$. 
Suppose $p,q$ any arbitrary hyperbolic periodic points supporting such hyperbolic ergodic SRB measures. We claim that under the weak hyperbolic transitivity of $O_{T}^{f}(x)$, their ergodic homoclinic classes, coincide.

We shall prove that $\tilde{\Lambda}(\p)=\tilde{\Lambda}(\q)$. Using Lemma \ref{total transitivity in EHC}, $\pi^{-1}(x)$ is dense in $\tilde{\Lambda}(\p)$ also in $\tilde{\Lambda}(\q)$. Let $\tilde{z}$ any point of $\tilde{\Lambda}(\p)$, we need to show that $\tilde{z}\in\tilde{\Lambda}(\q)$. By density property, take any sequence $\{\x^{i}\}$ of $\pi^{-1}(x)\cap\tilde{\Lambda}(\q)$ such that $\tilde{d}(\x^{i},\tilde{z})\xrightarrow{i\to \infty} 0$, then we claim that $\tilde{z} \in\tilde{\Lambda}(\q)$. This comes from the fact that inside the Pesin blocks the local stable-unstable manifolds vary continuously and this implies that once all $\x^{i}\in \tilde{\Lambda}(\q)$ then $\z\in \tilde{\Lambda}(\q)$ too. By a similar approach one can show that
$\tilde{\Lambda}(\q)\subset \tilde{\Lambda}(\p)$ and this finishes the claim as well as the proof of the theorem and uniqueness of the ergodic hyperbolic measures with SRB property. 
\end{proof}

Now let us give the proof of other theorems. 

\vspace{0.1cm}
\begin{proof}[\textbf{Proof of Theorem 2}]
	Once having weak hyperbolic pre-transitivity, we show that nominated pre-transitive periodic point, let call it $p$, becomes in a relation with all other periodic points in special the ones carrying the support of hyperbolic SRB measures. This comes from the fact that the stable set of $O(p)$ spreads on its pre-periodics which in case is dense. For instance let $q$ any hyperbolic periodic point carrying the support of some hyperbolic SRB measure. Also let suppose that $\bar q$ is its unique inverse limit periodic object. Then one is able using the continuity of $f$, construct some $\tilde{p}\in O_{T}^{f}(p)$ which is close enough to $\q$ such that by Lemma \ref{transversality} succeed the transversality between the local stable manifold of $\tilde{p}$ with $W^{u}(\bar q)$. Thereupon $p$ and $q$ becomes in a homoclinic relation, meaning that the cardinality of the skeleton set, can be just one, which signify the uniqueness of SRB measure by Theorem \ref{pre-main}.
\end{proof}

\vspace{0.1cm}
\begin{proof}[\textbf{Proof of Theorem 3}] This theorem is a simple consequence of Theorem \ref{main1}. It drives that in a conservative setting the unique hyperbolic measure is Lebesgue measure, which indicates the ergodicity of the dynamical system.
\end{proof}

\vspace{0.1cm}
\begin{proof}[\textbf{Proof of Corollary 3.1}] The fact that a $C^{1}-$ robust weak hyperbolic transitivity implies the statistic stability is also, a corollary of Theorem \ref{main1}. Let $g\in\mathcal{V}$ be a $C^{1-}$ small perturbation of $f$ in a neighborhood of $f\in \mathcal{C}^{r}(M)$. Once $g$ preserves the weak hyperbolic transitivity, then due to Theorem \ref{main1}, it has the unique SRB measure property, which implies the statistic stability of $f$. Whenever being in a conservative setting, it bears the stable ergodicity.
	
\end{proof} 

\section{Examples} 
In this section we try to mention class of examples which the main theorem may apply. We see the uniqueness of SRB measures or exact number of SRB measures for some known examples using above theorems and give a deliberation on their statistic stability or ergodic stability in case the SRB measure is the Lebesgue measure. 

\subsection{Transitive Anosov maps}

According to \cite{14}, \cite{PM} on one hand endomorphisms are not structurally stable and from other hand there exist further results \cite{Q}, \cite{Ikeda}, \cite{BC} where demonstrate the structural stability of endomorphisms on their inverse limit space. In special P. Berger and A. Kocsard, in their recent work show that Axiom A endomorphisms with a strong transversality condition are inverse limit stable and vice versa.

One of the first examples we can mention here are Anosov endomorphisms on the torus. It is well-known that Anosov maps of the torus are weak hyperbolic transitive  (\cite{AH} see Theorem 8.3.4). So on by the main result of this paper, transitive Anosov maps have a unique hyperbolic SRB measure, moreover in Lemma \ref{transitivity}, we show that a weak hyperbolic transitivity is equivalent to a weak hyperbolic transitivity on the inverse limit space which using the structural stability of such endomorphisms on the inverse limit space, it arises that such Anosov are robustly weak hyperbolic and transitive which due to Theorem \ref{main3}, they become examples with statistical stability. In case like Spacial Anosov endomorphisms of torus which are conservative maps, the theorem implies their ergodic stability.

It is notable that in a recent work, M. Anderson shows that the homotopy class of hyperbolic and conservative endomorphisms of the torus, consist entirely of transitive maps.

\subsection{The Absence Of Hyperbolic-Transitivity for Kan Type Examples}\label{kan}
The Kan's example is a local diffeomorphism $F$ defined on the cylinder $\mathbb{S}^1 \times [0,1]$ as a skew product:
$$
F(z, t):= (z^d, f_z(t)), 
$$ where $z \in \mathbb{S}^1$ is a complex number of norm one and $z^d$ is the expanding  covering of the circle of degree $d > 2$. For each $z \in \mathbb{S}^1$ the function $f_z: [0, 1] \rightarrow [0,1]$ is a diffeomorphism fixing the boundary of $[0, 1].$ Take two fixed points of $z^d$ called $p, q.$ We require that $f_p$ and $f_q$ have exactly two fixed points each, a source at $t=1$ (respectively $t=0$) and a sink at $t=0$ (respectively $t=1$). Furthermore, $|f_z^{'} (t)| < d$ and 
$$
\int \log f_z^{'}(0) dz < 0 \quad \text{and} \quad  \int \log f_z^{'}(1) dz < 0.
$$ 
Under these conditions $F$ has two intermingled SRB measures which are normalized Lebesgue measure of each boundary circle. Under some more conditions $F$ is not only transitive that also robustly transitive (see \cite{BDV}).
We can consider two such examples and glue them to find a local diffeomorphism of $\mathbb{T}^2$ admitting two SRB measures also with topological transitivity but not robustly anymore. In paper \cite{UV}
they show that a 3 dimensional torus with two intermingled basins is not robust anymore (in a strong partially hyperbolic setting).

\begin{figure}
	\centering
	\includegraphics[width=0.45\linewidth]{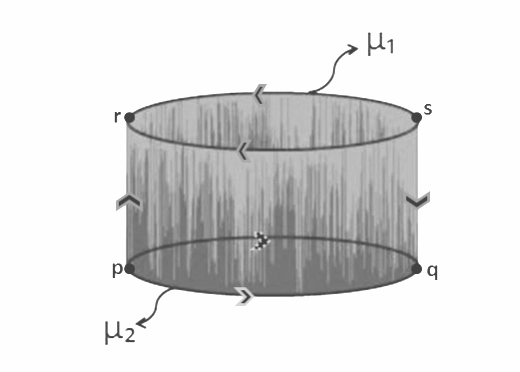}
	\caption{Two intermingled basins of the Kan example.}
	\label{fig:Kan}
\end{figure}

Let denote the two SRB measures with intermingled basins in Kan example with $\mu_1,\mu_2$. 
Due to our main result, the Kan type examples either bearing the top-transitivity but the forward dense orbit can not be of any weak hyperbolic type. John Milnor, in some of his unpublished dynamics notes, shows that I. Kan example on cylinder, either having top-transitivity is not a measure-transitive example. As measure transitivity is equivalent to ergodicity, it is not difficult to see that the 2-dim Lebesgue measure is a non-ergodic hyperbolic measure and the two invariant intermingled basin of attractions $B(\mu_1),B(\mu_2)$ have positive Lebesgue measures. Not being a measure transitive here plus the hyperbolicity means that you will not find any hyperbolic point which belongs to both of the basins of attractions. 
Whereas it is a well-known fact that Kan examples does not have a mixing property, we believe that in such examples in fact by a small perturbation, one can obtain a weak hyperbolic transitivity which leads to a unique SRB property and significantly the reason of non-robustness property. In this way one may obtain a weak hyperbolic transitive example which does not have a mixing property.
 



\subsection{Derived from Anosov Maps}
The third class of weak hyperbolic robustly transitive examples are an open set of Derived-from Anosov examples which appears in \cite{18}. In Theorem B of this paper N. Sumi shows that the class of all  $C^{1}-$regular maps of the torus, contain a nonempty $C^{1}$ open set $\mathcal{U}$, such that every regular map belonging to $\mathcal{U}$ is a DA-map with topologically mixing property.

Such class of examples possessing both of the properties, topologically mixing and weak hyperbolic transitivity, by Theorem\ref{main1} not only have unique SRB measure, that also have $C^1-$ stable ergodicity. 

\begin{remark}
In both cases of Anosov endomorphisms and Derived from Anosov case, one can show the existence of hyperbolic pre-transitivity (a hyperbolic periodic point with dense set of pre-images) and use Theorem \ref{2}, to obtain the uniqueness property of the SRB measures.
\end{remark}


\begin{thebibliography}{RRRRRR}
	
	
	
	\bibitem [AH]{AH} N. Aoki, K. Hiraide, \emph{Topological Theory of Dynamical Systems}. Mathematical Library,
	North Holland 1994. MR 95m:58095.
	
	\bibitem[BC]{BC} P. Berger and A. Kocsard. Structural stability of the inverse limit of endomorphisms.
	ArXiv e-prints, June 2013......
	
	\bibitem [BP1]{7} Luis Barreira; Yakov B. Pesin, \textit{Lyapunov exponents and smooth ergodic theory.} University Lecture Series, 23. American Mathematical Society, Providence, RI, 2002.
	
	
	\bibitem [BP2]{BP2} C.Bonatti, R. Potrie;  \textit{Many intermingled basins in dimension 3}, arXiv:1603.03803v1, 2016.
		
	
	\bibitem [BDV]{BDV} C. Bonatti, L. J. D\'{i}az, and M. Viana. Dynamics beyound uniform hyperbolicity, \emph{Volume 102 of Encyclopedia of Mathematical Sciences.} Springer-Verlag, 2005.
	
	
	\bibitem[DQ]{1} DQ. Jian, M. Qian, Ergodic hyperbolic attractors of endomorphisms, \emph{Journal of Dynamical and Control Systems},Vol. 12, No. 4, (2006), 465–488. 
	
	
	\bibitem[I]{Ikeda} H. Ikeda, $\Omega$-inverse limit stability theorem, \emph{Trans. Am. Math. Soc.} 1996, 348, 2183–2200.
	
	
	\bibitem[HHTU]{13} F. Rodriguez Hertz; M. Rodriguez Hertz; A. Tahzibi; R. Ures,  Uniqueness of SRB measures for transitive diffeomorphisms on surfaces. \emph{Comm. Math. Phys.} 306 (2011), no. 1, 35-49.
	
	
	
	\bibitem[K1]{8} A. Katok, Lyapunov exponents, entropy and periodic orbits for diffeomorphisms.\emph{ Inst. Hautes \'{E}tudes Sci. Publ. Math.}, No. 51 (1980), 137-173.
	
	
	\bibitem [K2]{5} Ittai Kan, Open sets of diffeomorphisms having two attractors each with an everywhere dense basin, \emph{Bull.Amer. Math. Soc 31}, (1994), no. 1, 68-74.
	
	\bibitem[LLL]{LLL} Z. Lian, P. Liu, K. L, SRB Measures for A Class of Partially Hyperbolic Attractors in Hilbert spaces, arXiv:1508.03301.
	
	\bibitem[LP]{16} C. Lizana; E. Pujals, Robust transitivity for endomorphisms. \emph{Ergodic Theory Dynam. Systems} 33 (2013), no. 4, 1082-1114.
	
    \bibitem[LPV]{LPV}  C. Lizana, V. Pinheiro, and P. Varandas, Contribution to the ergodic theory
		of robustly transitive maps, Discrete Contin. Dyn. Syst. 35 (2015), no. 1, 353–365
		
	
	
	
	
	\bibitem[M]{M}   Milnor, On the concept of attractor, \emph{Comm. Math. Phys.} 99 (1985), 177-195; 102 (1985),
	517-519.
	
	\bibitem [MP]{PM} R. Mañé, C. Pugh, Stability of endomorphisms. \emph{Warwick Dynamical Systems 1974.}
		Lecture Notes in Math, 468, Springer, 1975, 175–184.
	
	\bibitem[MT]{MT}  P. Mehdipour, A. Tahzibi, SRB Measures and Homoclinic Relation for Endomorphisms. 
	\emph{J. Stat. Phys.} 163 (2016), no. 1, 139–155.
	
	\bibitem[MT2]{MT2}  P. Mehdipour, A. Tahzibi, Katok closing lemma for non-singular endomorphisms. 
	
	\bibitem[P]{14} Feliks Przytycki; Anosov endomorphisms.
	\emph{Studia Math.} 58 (1976), no. 3, 249-285.
	
	\bibitem[Q]{Q} J. Quandt. On inverse limit stability for maps \emph{Journal of Differential Equations}, Volume 79, Issue 2, 1989, Pages 316-339.
	
	\bibitem [QXZ]{10} Min, Qian, Jian-sheng, Xie; Shu, Zhu \textit{Smooth ergodic theory for endomorphisms}. Lecture notes in mathematics, Vol. 1978 .
	Springer-Verlag, Berlin Heidelberg, 2009.
	
	
	\bibitem[S]{18} N. Sumi, A class of differentiable toral maps which are topologically mixing, \emph{Proc. Amer. Math. Soc.} 127 (1999), 915-924.
	
	\bibitem[UV]{UV} R. Ures, C. H. Vásquez, On the non-robustness of intermingled basins, \emph{Ergod. Th. and Dynam. Sys. }, Cambridge University Press, 1-17, 2016.
	
	\bibitem[UW]{2}M. Urbanski, C. Wolf, SRB measures for Axiom A endomorphisms. \emph{Math. Res. Lett.} 11 (5–6), (2004), 785–797.
	
	\bibitem[VY1]{VY1} M. Viana and J. Yang, Physical measures and absolute continuity for one-dimensional center direction. \textit{ Annales Inst. Henri Poincaré - Analyse Non-Lin\'{e}aire 30}, (2013), 845-877.
	
	\bibitem[VY]{VY} M. Viana and J. Yang, Geometric and measure-theoretical structures of maps with mostly contracting center, Preprint, 2015.
	
	
	
\end{thebibliography}
\end{document}